\newtheorem{thm}{Theorem}[section]
\newtheorem{conj}[thm]{Conjecture}
\newtheorem{prop}[thm]{Proposition}
\newtheorem{defn}[thm]{Definition}
\newtheorem{exam}[thm]{Example}
\numberwithin{equation}{section}
\begin{document}

\title[Subalgebras of Evolution algebras]{On the property of subalgebras of Evolution algebras}%

\author{L.M. Camacho, A.Kh. Khudoyberdiyev, B.A. Omirov}

\address{[L.\ M. \ Camacho] Dpto. Matem\'{a}tica Aplicada I.
Universidad de Sevilla. Avda. Reina Mercedes, s/n. 41012 Sevilla.
(Spain)} \email{lcamacho@us.es}

\address{[A.\ Kh.\ Khudoyberdiyev and B.\ A.\ Omirov] Institute of Mathematics, National University of Uzbekistan,
Tashkent, 100125, Uzbekistan.} \email{khabror@mail.ru,
omirovb@mail.ru}

\begin{abstract}
In this paper we study subalgebras of complex finite dimensional
evolution algebras. We obtain the classification of nilpotent
evolution algebras whose any subalgebra is an evolution subalgebra
with a basis which can be extended to a natural basis of algebra.
Moreover, we formulate three conjectures related to description of
such non-nilpotent algebras.
\end{abstract}
\maketitle \textbf{Mathematics Subject Classification 2010}:
17D92, 17D99.

\textbf{Key Words and Phrases}: evolution algebra, evolution
subalgebra, nitpotency.

\section{Introduction}

Nowadays, the algebraic approach is effectively used in the study
of the genetics, dynamical systems in population biology. In 20s
and 30s of the last century the new object was introduced to
mathematics, which was the product of interactions between
Mendelian genetics and mathematics. One of the first scientist who
gave an algebraic interpretation of the $``\times"$ sign, which
indicated sexual reproduction was Serebrowsky \cite{ser}.
Etherington introduced the formal language of abstract algebra to
the study of the genetics \cite{e1}-\cite{e2}. An algebraic
approach in genetics consists of the study of various types of
genetic algebras (like algebras of free, "self-reproductive" and
bisexual populations, Bernstein algebras). Until 1980s, the most
comprehensive reference in this area was W\"orz-Busekros's book
\cite{Busekros}. A good survey on algebraic structure of genetic
inheritance is the Reed's article \cite{Reed}. More recent
results, such as genetic evolution in genetic algebras, can be
found in the Lyubich's book \cite{ly}.

Recently in the book of J.P. Tian \cite{Tian} a new type of evolution algebra was introduced. This algebra describes
some evolution laws of the genetics. The study of evolution algebras constitutes a new subject both in algebra and
the theory of dynamical systems. In the Tian's book a foundation of the framework of the theory of evolution algebras is established and some applications of evolution algebras in the theory of stochastic processes and genetics are discussed.
Evolution algebras are in general non-associative and do not belong to any of the well-known classes of non-associative algebras. In fact, nilpotency, right nilpotency and solvability might be interpreted in a biological way as a various types of vanishing (``deaths") populations. Although an evolution algebra is an abstract system, it gives an insight for the study of non-Mendelian genetics. For instance, an evolution algebra can be applied to the inheritance of organelle genes, one can predict, in particular, all possible mechanisms to establish the homoplasmy of cell populations.

Recently, Rozikov and Tian \cite{Rozikov} studied algebraic structures of evolution algebras associated with Gibbs measures defined on some graphs. In the papers \cite{Camacho2}, \cite{Casas3}, \cite{Ladra} derivations, some properties of chain of evolution algebras and dibaricity of evolution algebras were studied. Certain algebraic properties of evolution algebras (like right nilpotency, nilpotency and solvability etc.) in terms of matrix of structural constants have been investigated in \cite{Camacho1}, \cite{Casas1}, \cite{Casas2}.

It is remarkable that a subalgebra and an ideal of a genetic algebra of population, biologically can be interpreted correspondingly as a subpopulation and a dominant subpopulation with respect to mating.

This paper is devoted to study of subalgebras of finite dimensional evolution algebras.

In order to achieve our goal we organize the paper as follows. In
Section 2, we give some necessary notions and preliminary results
about evolution algebras. We consider several types of subalgebras
of evolution algebras and present examples of difference of such
subalgebras as well. Section 3 is devoted to description of
evolution algebras of permutations satisfying that any subalgebra
is evolution subalgebra with a natural basis which can be extended
to a natural basis of the algebra (condition $P$). For the list of
two-dimensional evolution algebras we identify their subalgebras.
In Section 4, we classify the nilpotent complex evolution algebras
satisfying the condition $P$. In Section 5, we formulate three
conjectures related to the description of such non-nilpotent
algebras.

Through the paper all algebras are assumed complex and finite dimensional.

\section{Preliminaries.}

In this section we give necessary definitions and preliminaries results for understanding main results of the paper.
Let us define the main object of this work - evolution algebra.

\begin{defn} \cite{Tian} \rm Let $E$ be an algebra over a field $F.$ If it admits a
basis $\{e_1, e_2, \dots\}$ such that $$e_i \cdot e_j = 0 \quad
for \ i \neq j,\qquad  e_i \cdot e_i = \sum\limits_ka_{i,k}e_k
\quad for  \ any \ i,$$ then algebra $E$ is called evolution
algebra.
\end{defn}

The basis $\{e_1, e_2, \dots\}$ is said to be {\it natural basis of evolution algebra $E$}.
It is remarkable that this type of algebra depends on natural basis $\{e_1, e_2, \dots\}.$

We denote by $A=(a_{ij})$ the matrix of the structural constants of the evolution
algebra $E$.

\begin{defn}\label{subalgebra} \cite{Tian} \rm Let $E$ be an evolution algebra and $E_1$ be a
subspace of $E.$ If $E_1$ has a natural basis $\{e_i \ | \ i \in
\Lambda_1\}$ which can be extended to a natural basis $\{e_j \ |
\ j \in \Lambda \}$ of $E,$ then $E_1$ is called an evolution
subalgebra, where $\Lambda_1$ and $\Lambda$ are index sets and
$\Lambda_1$ is a subset of $\Lambda.$
\end{defn}

In fact, for the linear subspace $E_1$ of evolution algebra $E$ we can consider three conceptions of subalgebras.
\begin{enumerate}
\item $E_1$ is subalgebra in ordinary sense;

\item $E_1$ is subalgebra and there exists a natural basis of $E_1;$

\item $E_1$ is subalgebra and there exists a natural basis of $E_1$ which can be extended to a natural basis of $E.$
\end{enumerate}

Note that Definition \ref{subalgebra} agrees with the third
conception of subalgebra.

Below we present examples which show that conceptions 1 - 3 are different in general.

\begin{exam} \rm Let $E$ be a three dimensional evolution algebra with a natural
basis $\{e_1, e_2, e_3\}$ and the table of multiplication
$$e_1\cdot e_1 = e_1+e_2, \quad e_2\cdot e_2 = -e_1- e_2, \quad e_3\cdot e_3 = e_2+e_3.$$

It is not difficult to see that $E_1 = <e_1+e_2, e_2+e_3>$ is a
subalgebra, but $E_1$ is not an evolution subalgebra (that is, there does not exist a natural basis of $E_1$).

Indeed, if we assume the contrary, i.e., in the subspace $E_1$ there exists a natural basis $\{f_1, f_2\}$, then
$$f_1 = \alpha_1 (e_1+e_2) + \alpha_2 (e_2+e_3), \quad f_2 = \beta_1 (e_1+e_2) + \beta_2
(e_2+e_3).$$
with $\alpha_1\beta_2-\alpha_2\beta_1\neq 0.$

From the condition $f_1\cdot f_2 =0$ we derive
$$\alpha_1=\alpha_2=0 \quad \mbox{or} \quad \alpha_2=\beta_2=0\quad \mbox{or} \quad \beta_1=\beta_2=0.$$

Consequently, we get a contradiction with the assumption that $\{f_1, f_2\}$ is a natural
basis of $E_1.$
\end{exam}

\begin{exam} \rm Let $E$ be a three dimensional evolution algebra with a natural
basis $\{e_1, e_2, e_3\}$ and the following table of multiplication
$$e_1\cdot e_1 = e_1+e_2+e_3, \quad e_2\cdot e_2 = -e_1- e_2 +e_3, \quad e_3\cdot e_3 = 0.$$

It is not difficult to see that $E_1 = <e_1+e_2, e_3>$ is an evolution algebra with a natural basis $\{e_1+e_2, e_3\}$, but this basis can not be extended to a natural basis of evolution algebra $E.$

If we assume that there exists a natural basis $\{f_1, f_2\}$ of
$E_1$ such that $\{f_1, f_2, f_3\}$ is a natural basis of $E$,
then
$$f_1 = \alpha_1 (e_1+e_2)+ \alpha_2 e_3, \quad f_2 = \beta_1 (e_1+e_2) + \beta_2 e_3, \quad
f_3 = \gamma_1 e_1+ \gamma_2e_2 + \gamma_3 e_3.$$

From conditions $f_1\cdot f_3 = f_2\cdot f_3 =0$ we deduce $\alpha_1=\beta_1=0$ or $\gamma_1=\gamma_2=0.$ Therefore, we get a contradiction with the assumption that $\{f_1, f_2, f_3\}$ is a basis.
\end{exam}

For the sake of convenience, we introduce the following definition.

\begin{defn} \rm An evolution algebra $E$ is said
to satisfy a condition $P$ if any subalgebra of $E$ is an
evolution subalgebra with a natural basis which can be extended to
a natural basis of $E$.
\end{defn}

In \cite{Tian} the conditions for basis transformations that preserve naturalness of the basis are given. The relation between the matrices of structure constants in a new and old natural basis is established in terms of new defined operation on matrices, as well. Since the relation is not practical for our further purposes, we give the following brief version of isomorphism.

Let us consider non-singular linear transformation $T$ of a given
natural basis $\{ e_1,\dots, e_n\}$ with a matrix $(t_{ij})_{1\leq
i,j\leq n}$ in this basis and
$$f_i= \sum_{j=1}^n t_{ij}e_j, \ 1\leq i \leq n.$$
This transformation is isomorphism if and only if $f_i\cdot f_j=0$
for all $i\neq j.$

In the following theorem we present a list (up to isomorphism) of
2-dimensional evolution algebras.
\begin{thm} \cite{Casas2} \label{thm22} Any  $2$-dimensional non-abelian evolution algebra $E$ is
isomorphic to one of the following, pairwise non-isomorphic, algebras:
\begin{enumerate}
\item $\dim E^2=1$
\begin{itemize}
\item $E_1:\ \ e_{1}e_{1} = e_{1}$,

\item $E_2: \ \ e_{1}e_{1} = e_{1}, \ \ e_{2}e_{2}= e_{1}$,

\item $E_3: \ \ e_{1}e_{1} = e_{1} + e_{2}, \ \  e_{2}e_{2}= -e_{1}-
e_{2}$,

\item $E_4: \ \ e_{1}e_{1} = e_{2}$.
\end{itemize}
\item $\dim E^{2}=2$
\begin{itemize}

\item $E_5: \ \  e_{1}e_{1}=e_{1}+a_{2}e_{2}, \ \
e_{2}e_{2}=a_{3}e_{1}+e_{2}, \ \ 1 - a_{2}a_{3}\ne 0$, where
$E_5(a_{2},a_{3})\cong E_5'(a_{3},a_{2})$,

\item $E_6: \ \ e_{1}e_{1}=e_{2}, \ \ e_{2}e_{2}=e_{1}+a_{4}e_{2}$,\\
where for $a_4\ne 0$,  $E_6(a_{4})\cong E_6(a'_{4}) \
\Leftrightarrow \ \frac{a'_4}{a_4}=\cos\frac{2\pi k}{3} + i
\sin\frac{2\pi k}{3} \ \mbox{for some} \ k=0, 1, 2$.
\end{itemize}
\end{enumerate}
\end{thm}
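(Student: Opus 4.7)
Start with an arbitrary natural basis $\{e_1,e_2\}$ and write the general multiplication
\[
e_1^2 = a\,e_1 + b\,e_2,\qquad e_2^2 = c\,e_1 + d\,e_2,\qquad e_1 e_2 = 0,
\]
so that the structure matrix $A$ has first row $(a,b)$ and second row $(c,d)$. Non-abelianness means $A\neq 0$. By the isomorphism criterion recalled just before the statement, a nonsingular linear change $f_i = t_{i1}e_1+t_{i2}e_2$ defines a new natural basis if and only if $f_1 f_2 = t_{11}t_{21}(ae_1+be_2) + t_{12}t_{22}(ce_1+de_2) = 0$. I would then split the argument on the isomorphism invariant $r = \dim E^2 = \operatorname{rank} A \in \{1, 2\}$.

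\emph{Case $r = 1$.} Here $e_1^2$ and $e_2^2$ are collinear, spanning a line $E^2 = \langle u\rangle$. If exactly one of $e_1^2,e_2^2$ vanishes, say $e_2^2=0$, then either $a\neq 0$ and $v = a^{-1}e_1 + ba^{-2}e_2$ is a nonzero idempotent with $\{v,e_2\}$ a natural basis realising $E_1$, or $a=0$ and a rescaling of $e_2$ yields $E_4$. If both $e_1^2,e_2^2$ are nonzero, write $e_2^2 = \gamma\,e_1^2$ with $\gamma\in\mathbb{C}^\times$; then $u = ae_1+be_2$ satisfies $u^2 = \sigma u$ with $\sigma = a^2+b^2\gamma$. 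When $\sigma\neq 0$, the element $v = u/\sigma$ is idempotent, and completing to a natural basis by solving $vf_2 = 0$ and rescaling $f_2$ produces $f_2^2 = v$, giving $E_2$. When $\sigma=0$ (equivalently $\gamma = -a^2/b^2$), the diagonal rescaling $e_1\mapsto a^{-1}e_1$, $e_2\mapsto (b/a^2)e_2$ sends the table directly to the $E_3$ form.

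\emph{Case $r = 2$.} Linear independence of $\{ae_1+be_2,ce_1+de_2\}$ forces $t_{11}t_{21} = 0$ and $t_{12}t_{22} = 0$, so every admissible $T$ is diagonal or antidiagonal. If $a,d\neq 0$, diagonal rescaling $f_i = \lambda_i e_i$ normalizes $a=d=1$ and gives $E_5(a_2,a_3)$ with $a_2 = bd/a^2$, $a_3 = ca/d^2$ and $1-a_2 a_3 = \det A/(ad)\neq 0$; the antidiagonal change of basis is easily checked to realize exactly the exchange $(a_2,a_3)\leftrightarrow(a_3,a_2)$. If $a = 0$ (after swapping basis labels if needed), non-singularity of $A$ forces $b,c\neq 0$ and $e_1^2 = be_2$. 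Setting $f_1 = \mu e_1$, $f_2 = \nu e_2$ with $\nu = \mu^2 b$ enforces $f_1^2 = f_2$; requiring in addition $f_2^2 = f_1 + a_4 f_2$ forces the cubic constraint $\mu^3 = (b^2 c)^{-1}$ together with $a_4 = \mu^2 bd$. Thus $\mu$ is determined only up to a cube root of unity, so $a_4$ is defined only up to multiplication by $\mu^2\in\{1,\omega,\omega^2\}$, which is precisely the stated equivalence.

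\emph{Pairwise non-isomorphism.} The invariant $\dim E^2$ separates $\{E_1,\dots,E_4\}$ from $\{E_5,E_6\}$. Within the first group the three invariants (nilpotency of $E$, $\dim\operatorname{Ann}(E)$, and the existence of a nonzero idempotent) yield four distinct profiles: $E_1$ has $\dim\operatorname{Ann}=1$ and an idempotent, $E_2$ has $\dim\operatorname{Ann}=0$ and an idempotent, $E_3$ has no nonzero idempotent, and $E_4$ is nilpotent. In the second group $E_5$ and $E_6$ differ by whether some natural basis exhibits a vanishing diagonal entry of $A$ (which happens only in $E_6$), and the rigidity of the admissible diagonal/antidiagonal transformations computed above shows that distinct values of the parameters $(a_2,a_3)$ modulo the swap, respectively of $a_4$ modulo cube roots of unity, yield non-isomorphic algebras. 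The main technical obstacle is the last sub-case of Case $r=2$: one must carefully track how the cube-root-of-unity ambiguity in $\mu$ acts on $a_4$ in order to match precisely the equivalence stated in the theorem, and verify that no further identifications are forced.
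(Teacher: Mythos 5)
The paper does not actually prove this theorem; it is quoted verbatim from \cite{Casas2} as a known classification, so there is no internal proof to compare your argument against. Your reconstruction is correct and follows the standard route for this result: split on $\dim E^2=\operatorname{rank}A\in\{1,2\}$, use the criterion $f_1f_2=t_{11}t_{21}e_1^2+t_{12}t_{22}e_2^2=0$ to see that in the rank-$2$ case every admissible change of natural basis is diagonal or antidiagonal, and normalize; the one step you flag as delicate (that the cube-root-of-unity ambiguity in $\mu$ accounts for the full equivalence on $a_4$, with antidiagonal changes forcing no further identifications since they send the normal form of $E_6(a_4)$ to a matrix with $(1,1)$-entry $pa_4$ and $(2,2)$-entry $0$) does check out.
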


Consider the following $k$-dimensional evolution algebras
$$ES_{k}: \quad  \left\{\begin{array}{ll}e_i\cdot e_i = e_{i+1}, & 1 \leq i \leq k-1,\\[1mm]
e_k\cdot e_k = e_{1},&
\end{array}\right.\qquad EN_{k}: \quad \left\{\begin{array}{ll}e_i\cdot e_i = e_{i+1}, & 1 \leq i \leq k-1,\\[1mm]
e_k\cdot e_k = 0.
\end{array}\right.$$

In \cite{Khud}, the authors describe a complex evolution algebra $E_{n,\pi}(a_1, a_2, \dots, a_n)$ with a basis $\{e_1, e_2, \dots ,e_n\}$ and the table of multiplications as follows:
$$\left\{\begin{array}{ll}e_i\cdot e_i = a_ie_{\pi(i)}, &\ 1\leq i \leq n,\\[1mm]
e_i\cdot e_j = 0, & \ i \neq j,
\end{array}\right.$$
where $\pi$ is an element of the group of permutations $S_n$. Namely, the following assertion is true.
\begin{thm}  \label{thm23}
An arbitrary evolution algebra $E_{n,\pi}(a_1, a_2, \dots, a_n)$ is isomorphic to a direct sum of evolution algebras
$ES_{p_1}, \  ES_{p_2}, \  \dots, \ ES_{p_s}, \ EN_{k_1}, \
EN_{k_2}, \ \dots, \ EN_{k_r},$ i.e.,
$$E_{n,\pi}(a_1, a_2, \dots, a_n) \cong ES_{p_1}\oplus ES_{p_2} \oplus \dots \oplus  ES_{p_s} \oplus EN_{k_1} \oplus EN_{k_2} \oplus \dots \oplus EN_{k_r},$$
where $\displaystyle\sum_{i=1}^s{p_i} +\sum_{j=1}^r{k_j}=n$
\end{thm}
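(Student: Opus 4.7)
My plan is to decompose the algebra according to the cycle structure of $\pi$, and then to treat each cycle separately depending on whether all its associated coefficients $a_i$ are nonzero.

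First I would write $\pi$ as a product of disjoint cycles $C_1,\dots,C_t$ on $\{1,\dots,n\}$ and set $E^{C_\ell}=\langle e_i : i\in C_\ell\rangle$. Since $e_ie_j=0$ for $i\ne j$ and $e_i^{2}=a_ie_{\pi(i)}$ keeps $i$ inside its own cycle, each $E^{C_\ell}$ is an ideal that annihilates every $e_j$ lying in another cycle. Consequently
\[
E_{n,\pi}(a_1,\dots,a_n)=E^{C_1}\oplus\cdots\oplus E^{C_t},
\]
and it suffices to identify $E^C$ for a single cycle $C=(i_1,i_2,\dots,i_p)$ with $\pi(i_j)=i_{j+1}$ (indices mod $p$).

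Case (a), all $a_{i_j}\ne 0$: I would seek a new natural basis $f_j=c_je_{i_j}$ normalized so that $f_j^{2}=f_{j+1}$, which is equivalent to the recursion $c_{j+1}=c_j^{2}a_{i_j}$. Unwinding gives
\[
c_{j+1}=c_1^{2^{j}}\prod_{k=1}^{j}a_{i_k}^{2^{j-k}},
\]
so the closure requirement $c_{p+1}=c_1$ reduces to $c_1^{2^{p}-1}=\bigl(\prod_{k=1}^{p}a_{i_k}^{2^{p-k}}\bigr)^{-1}$. Since the right-hand side is a nonzero complex number, algebraic closedness of $\mathbb{C}$ supplies a solution $c_1\in\mathbb{C}^{*}$, producing $E^C\cong ES_p$. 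Case (b), some $a_{i_j}$ vanish: after a cyclic relabeling let $s_1<\cdots<s_m=p$ be the positions where the coefficient is zero, and set $s_0=0$; the subsets $A_\ell=\{i_{s_{\ell-1}+1},\dots,i_{s_\ell}\}$ partition the cycle. I would verify that each $A_\ell$ spans an ideal of $E^C$: inside $A_\ell$ the map $\pi$ shifts indices forward while staying in the arc, and at the terminal index $i_{s_\ell}$ the square vanishes because $a_{i_{s_\ell}}=0$. The open-ended recursion $c_{j+1}=c_j^{2}a_{i_j}$ no longer has a closure constraint, so choosing any $c_1\ne 0$ solves it freely and yields $A_\ell\cong EN_{s_\ell-s_{\ell-1}}$.

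Collecting the $ES_p$'s and $EN_k$'s produced by every cycle of $\pi$ gives the claimed decomposition, and counting basis vectors forces $\sum p_i+\sum k_j=n$. I expect the main obstacle to be Case (a): the rescaling around a complete cycle produces a single polynomial equation $c_1^{2^{p}-1}=\mathrm{const}\ne 0$, and its solvability rests precisely on the standing assumption that the ground field is $\mathbb{C}$; over a non-algebraically-closed field the statement of the theorem would fail in general.
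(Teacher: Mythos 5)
The paper does not prove Theorem \ref{thm23} itself; it is imported from the reference \cite{Khud} without proof. Your argument --- splitting $E_{n,\pi}$ along the disjoint cycles of $\pi$, then within each cycle either solving the rescaling recursion $c_{j+1}=c_j^{2}a_{i_j}$ around the whole cycle (the closure condition $c_1^{2^{p}-1}=\bigl(\prod_{k}a_{i_k}^{2^{p-k}}\bigr)^{-1}$ being solvable over $\mathbb{C}$) to get $ES_p$, or cutting the cycle at the indices with $a_{i_j}=0$ to get a sum of $EN_k$'s --- is correct and is exactly the standard proof of this decomposition; your closing remark correctly identifies algebraic closedness as the only place the ground field matters.
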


We introduce the following sequence:
$$E^{k}=\sum_{i=1}^{k-1}E^iE^{k-i},\, k \geq 1$$

\begin{defn}An evolution algebra $E$ is called nilpotent if there exists $n\in
\mathbb{N}$ such that $E^{n}=0$ and the minimal such number is
called index of nilpotency. \end{defn}

\begin{thm} \cite{Camacho1}\label{2.9} Let $E$ be an $n-$dimensional evolution algebra.
Then $E$ is nilpotent if and only if the matrix of structure constants $A$ can be transformed by permutation of the natural basis to the following form:
$$
A=\left(\begin{array}{ccccc}
0 &a_{12}&a_{13}&\dots & a_{1n}\\
0 &0&a_{23}&\dots & a_{2n}\\
0 &0&0&\dots & a_{3n}\\
\vdots &\vdots&\vdots&\ddots & \vdots\\
0 &0&0&\dots & 0\\
\end{array}\right).
$$
\end{thm}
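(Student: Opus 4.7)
The plan is induction on $n=\dim E$, with the substance concentrated in the direction asserting that nilpotency implies the triangularizable form. For the reverse implication, reorder the basis so that $a_{ij}=0$ whenever $j\leq i$, and set $I_{k}=\mathrm{span}(e_{k},\dots,e_{n})$. Since $e_{i}e_{j}=0$ for $i\neq j$ and $e_{i}^{2}\in I_{i+1}$, one immediately obtains the key relation $I_{k}\cdot I_{l}\subseteq I_{\max(k,l)+1}$, and a routine induction on $k$ using $E^{k}=\sum_{i=1}^{k-1}E^{i}E^{k-i}$ yields $E^{k}\subseteq I_{\lceil\log_{2}k\rceil+1}$. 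In particular $E^{2^{n}}=0$, so $E$ is nilpotent.

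For the direct implication, the crucial step is to produce a basis vector whose square vanishes. In any natural basis the annihilator admits the explicit description
$$\mathrm{Ann}(E)=\{x\in E:xE=0\}=\mathrm{span}\bigl(e_{j}:e_{j}^{2}=0\bigr),$$
because for $x=\sum_{i}c_{i}e_{i}$ the identity $x\cdot e_{j}=c_{j}e_{j}^{2}$ forces $c_{j}=0$ as soon as $e_{j}^{2}\neq 0$. If $N$ denotes the index of nilpotency, then $E^{N-1}\cdot E\subseteq E^{N}=0$, so commutativity of the evolution product gives $E^{N-1}\subseteq\mathrm{Ann}(E)$; since $E^{N-1}\neq 0$ there must exist an index $j_{0}$ with $e_{j_{0}}^{2}=0$.

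Because $\mathrm{span}(e_{j_{0}})$ is a one-dimensional ideal lying inside the annihilator, the quotient $\bar{E}=E/\mathrm{span}(e_{j_{0}})$ is a nilpotent $(n-1)$-dimensional evolution algebra with natural basis $\{\bar{e}_{i}:i\neq j_{0}\}$. The induction hypothesis provides a permutation of this basis for which the matrix of $\bar{E}$ is strictly upper triangular; extending this permutation by placing $e_{j_{0}}$ in the final slot produces a natural basis of $E$ with the desired property, for the top-left $(n-1)\times(n-1)$ block is strictly upper triangular by induction, the last row vanishes because $e_{j_{0}}^{2}=0$, and every remaining entry of the last column sits strictly above the diagonal by virtue of its position. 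The main obstacle is the identification of $\mathrm{Ann}(E)$ together with the fact that it is nonzero in the nilpotent setting; once that is in place, the induction and the bookkeeping of the permutation become routine.
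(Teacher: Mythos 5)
Your proof is correct. Note that the paper itself offers no proof of this statement: Theorem~\ref{2.9} is quoted from \cite{Camacho1}, so there is no in-text argument to compare against. Judged on its own, your argument is complete in both directions: the filtration $I_k$ with $I_k\cdot I_l\subseteq I_{\max(k,l)+1}$ correctly handles the easy implication (the $\log_2$ bookkeeping checks out, since in each summand $E^iE^{k-i}$ one of $i,k-i$ is at least $k/2$), and for the converse the identity $x\cdot e_j=c_je_j^2$ does give $\mathrm{Ann}(E)=\mathrm{span}(e_j: e_j^2=0)$, while $E^{N-1}E\subseteq E^N=0$ together with commutativity forces a basis vector of square zero, after which the quotient-and-lift induction is routine and correctly carried out. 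This annihilator-based induction is the standard route to this result and is, if anything, cleaner than arguments that pass through right nilpotency.
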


The next theorem gives the classification of evolution algebras
with maximal possible index of nilpotency.

\begin{thm} \cite{Camacho1} \label{th3}
Any $n$-dimensional complex evolution algebra with maximal
index of nilpotency is isomorphic to one of pairwise non-isomorphic
algebras with the following matrix of structural constants
\[\begin{pmatrix}
 0 & 1 & a_{13} &\dots &a_{1,n-1}&0 \\[1mm]
 0 & 0 & 1 &\dots &a_{2,n-1}& 0 \\[1mm]
0 & 0 & 0 &\dots &a_{3,n-1}& 0 \\[1mm]
\vdots & \vdots & \vdots &\cdots & \vdots &\vdots\\[1mm]
0 & 0 & 0 &\cdots & 0& 1\\[1mm]
0 & 0 & 0 &\cdots &0 & 0
\end{pmatrix},
 \]
where one of non-zero $a_{ij}$ can be chosen equal to 1.
\end{thm}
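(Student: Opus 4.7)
The plan is to build on Theorem \ref{2.9} and produce the canonical form through two successive natural-basis transformations. Theorem \ref{2.9} provides a natural basis in which the structure matrix is strictly upper triangular, $e_i\cdot e_i = \sum_{j>i}a_{ij}e_j$. Since each strict inclusion $E^{k+1}\subsetneq E^k$ in the filtration drops the dimension by at least one, the maximal possible nilpotency index is $n+1$; the proof splits into characterising when this bound is attained and then normalising the matrix under that condition.

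For the first part, observe that $E^2$ is the row space of the structure matrix. After deleting the zero last row and zero first column, the remaining $(n-1)\times(n-1)$ block is itself upper triangular with diagonal $(a_{1,2},a_{2,3},\dots,a_{n-1,n})$, so $\dim E^2=n-1$ if and only if every superdiagonal entry is non-zero. Under this assumption, the evolution-algebra product rule $x\cdot y=\sum_i x_i y_i\, e_i^2$ yields $E^k=\operatorname{span}(e_k,\dots,e_n)$ by induction on $k$, giving index exactly $n+1$; conversely a zero superdiagonal entry forces $\dim E^2\le n-2$ and hence a strictly smaller index. I expect the ``only if'' direction to be the most delicate point, but the submatrix rank computation resolves it cleanly.

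With the superdiagonal condition in force, two explicit transformations reduce the matrix to canonical form. The rescaling $e_i\mapsto \lambda_i e_i$ preserves naturalness and transforms $a_{ij}$ to $\lambda_i^{2}\lambda_j^{-1}a_{ij}$; taking $\lambda_1$ free and $\lambda_{i+1}=\lambda_i^{2}a_{i,i+1}$ recursively makes every superdiagonal entry equal to $1$. Then the simultaneous shift $e_k\mapsto e_k+c_k e_n$ for $k=2,\dots,n-1$ (leaving $e_1$ and $e_n$ fixed) again preserves the natural basis, because $e_n\cdot e_n=0$ and $e_n\cdot e_j=0$ for $j\neq n$. A direct calculation shows that this transformation affects only the last column,
\[
a'_{i,n}=a_{i,n}-\sum_{k=i+1}^{n-1}c_k a_{ik}\quad (i\le n-2),\qquad a'_{n-1,n}=1,
\]
and leaves all entries in columns $j<n$ unchanged. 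Setting $a'_{i,n}=0$ produces an upper triangular linear system in $c_{n-1},\dots,c_2$ with unit diagonal (the super-diagonal entries $a_{i,i+1}=1$), uniquely solvable. This yields precisely the matrix displayed in the theorem, with free parameters $a_{ij}$ for $i+2\le j\le n-1$.

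The residual freedom after normalisation is the single parameter $\lambda_1$. A short calculation shows that $\lambda_i^{2}\lambda_j^{-1}$ involves $\lambda_1$ with exponent $2^{i}-2^{j-1}$, non-zero for $i+2\le j\le n-1$, so over $\mathbb{C}$ one may choose $\lambda_1$ to rescale any chosen non-zero $a_{ij}$ to $1$, accounting for the final clause of the theorem.
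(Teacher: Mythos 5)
The paper itself does not prove this theorem: it is quoted from \cite{Camacho1} as a known result, so there is no internal proof to compare your attempt against. Judged on its own terms, the second half of your argument, the normalisation, is correct and is the standard route: the rescaling $e_i\mapsto\lambda_ie_i$ with $\lambda_{i+1}=\lambda_i^{2}a_{i,i+1}$ makes the superdiagonal equal to $1$; the shifts $e_k\mapsto e_k+c_ke_n$ preserve naturalness (since $e_n^2=0$) and clear the last column via a unit upper-triangular system; and the residual parameter $\lambda_1$ enters $a_{ij}$ with exponent $2^{i}-2^{j-1}\neq 0$, which justifies normalising one remaining nonzero entry to $1$.

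The first half, however, contains a genuine error. With the paper's definition $E^{k}=\sum_{i=1}^{k-1}E^{i}E^{k-i}$, the powers of an algebra with all superdiagonal entries nonzero do \emph{not} satisfy $E^{k}=\operatorname{span}(e_k,\dots,e_n)$, and the chain $E\supseteq E^{2}\supseteq\cdots$ is not strictly decreasing: for instance $E^{4}\supseteq E^{2}E^{2}\ni e_2\cdot e_2=e_3+\cdots$, so $E^{4}=E^{3}=\operatorname{span}(e_3,\dots,e_n)$. Consequently the maximal index of nilpotency is $2^{n-1}+1$, not $n+1$; what you computed is the right-nilpotency index (for the sequence $E,\ E^{2},\ E^{2}E,\dots$), for which $n+1$ is correct but which is a different invariant. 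Your ``if'' direction therefore establishes the wrong index, and your ``only if'' direction --- that a zero superdiagonal entry forces a strictly smaller index because ``each strict inclusion drops the dimension by at least one'' --- collapses, because the inclusions need not be strict. The equivalence between maximal nilpotency index and $\dim E^{2}=n-1$ is in fact true, but proving that $\dim E^{2}\le n-2$ forces $E^{2^{n-1}}=0$ requires a genuine estimate on how fast the powers $E^{2^{k}}$ descend; that estimate is the technical core of the cited result in \cite{Camacho1} and is missing from your argument.
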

The set of all evolution algebras whose matrices of structural constants have the form of Theorem
 \ref{th3} will be denoted by $ZN^{n}.$

\section{Main result}

First we investigate which evolution algebras of the list of
Theorem \ref{thm22} satisfy (or not) the condition $P$.

\begin{prop} Evolution algebras $E_1$ and $E_4$ satisfy the condition $P.$
\end{prop}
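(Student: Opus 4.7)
The plan is to enumerate every subalgebra of $E_1$ and $E_4$ by brute force, which is feasible because both algebras are only $2$-dimensional. In dimension two, the only subalgebras are $\{0\}$, the whole algebra $E$, and the $1$-dimensional subalgebras; the first two cases satisfy condition $P$ trivially (take the empty basis or the given natural basis $\{e_1,e_2\}$). So the real content is the analysis of the $1$-dimensional subalgebras, for which a single nonzero vector spans a natural basis automatically, and the only thing to verify is that this vector is proportional to one of the natural basis vectors of $E$ (whence it extends to $\{e_1,e_2\}$).

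I would fix a generic vector $v=\alpha e_1+\beta e_2$ and recall that $\langle v\rangle$ is a subalgebra precisely when $v\cdot v=\mu v$ for some scalar $\mu$. For $E_1$ one computes $v\cdot v=\alpha^2 e_1$, so the condition becomes $\mu\alpha=\alpha^2$ and $\mu\beta=0$. If $\alpha=0$ then $v=\beta e_2$ and $\langle v\rangle=\langle e_2\rangle$; if $\alpha\neq 0$ then $\mu=\alpha\neq 0$ forces $\beta=0$, and $\langle v\rangle=\langle e_1\rangle$. For $E_4$ one has $v\cdot v=\alpha^2 e_2$, so the system $\mu\alpha=0$, $\mu\beta=\alpha^2$ forces $\alpha=0$ (since $\alpha\neq 0$ would give $\mu=0$ and then $\alpha^2=0$). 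Hence the only proper nontrivial subalgebra of $E_4$ is $\langle e_2\rangle$.

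In each case the list of $1$-dimensional subalgebras obtained is a subset of $\{\langle e_1\rangle,\langle e_2\rangle\}$, and each such subalgebra has the natural basis $\{e_i\}$ for some $i\in\{1,2\}$, which already extends to the given natural basis $\{e_1,e_2\}$ of the ambient algebra. Combined with the trivial cases, this verifies condition $P$ for both $E_1$ and $E_4$.

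There is no real obstacle here; the proof is a direct finite case check. The only point demanding a bit of care is not forgetting the case $\mu=0$ (i.e.\ $v\cdot v=0$), which is what picks out the subalgebras $\langle e_2\rangle$ in both $E_1$ and $E_4$; apart from that, the argument is a one-line linear-algebra computation for each algebra.
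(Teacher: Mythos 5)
Your proposal is correct and follows essentially the same route as the paper: reduce to one-dimensional subalgebras, write a spanning vector in the natural basis, impose $v\cdot v=\mu v$, and conclude that the only candidates are $\langle e_1\rangle$ and $\langle e_2\rangle$ (resp.\ $\langle e_2\rangle$ for $E_4$), which trivially extend to the natural basis. The only difference is that you carry out the $E_4$ computation explicitly where the paper merely says it is similar.
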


\begin{proof} Since $E$ is a two dimensional, then any non-trivial subalgebra of $E$ is one-dimensional.

Let $E_1'$ be an one-dimensional subalgebra of $E_1$ and $E_1'=<x>$ with $x=A_1e_1 + A_2e_2.$

Consider
$$x\cdot x=(A_1e_1 + A_2e_2)\cdot (A_1e_1 + A_2e_2)=A_1^2e_1$$

On the other hand,
$$ x \cdot x = \alpha x=\alpha(A_1e_1 + A_2e_2).$$

Then $A_1^2 = \alpha A_1$ and $\alpha A_2 =0.$
\begin{itemize}
\item If $\alpha =0,$ then $A_1=0$ and $\{e_2\}$ is the basis of $E_1'.$ Obviously, this basis is extendable to the natural basis $\{e_1, e_2\}$ of $E_1.$
\item If $\alpha \neq 0,$ then $A_1=\alpha, A_2=0$ and $\{e_1\}$ is the basis
of $E_1',$ which is also extendable to the natural basis of $E_1.$
\end{itemize}

The assertion of proposition regarding the algebra $E_4$ is
carried out in a similar way.
\end{proof}

\begin{prop} Evolution algebras $E_2,$ $E_3,$ $E_5$ and $E_6$ do not satisfy the condition $P.$
\end{prop}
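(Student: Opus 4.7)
The strategy is uniform across the four algebras: exhibit a one-dimensional subalgebra $\langle x\rangle$, where $x = A_1 e_1 + A_2 e_2$ has both coordinates nonzero in the given natural basis, and verify that no vector $f_2$ linearly independent from $x$ satisfies $x \cdot f_2 = 0$. Since a one-dimensional subspace automatically admits a natural basis (namely $\{x\}$), such a subalgebra witnesses the failure of condition $P$ in the two-dimensional setting. The verification will in each case reduce to showing that a certain explicit $2 \times 2$ linear system in $(B_1,B_2)$ has only the trivial kernel outside the line $\mathbb{C} x$.

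For $E_2$ and $E_3$ I choose $x$ with $x \cdot x = 0$, which makes $\langle x\rangle$ trivially a subalgebra. In $E_2$ the element $x = i e_1 + e_2$ satisfies $x\cdot x = i^2 e_1 + e_1 = 0$, and in $E_3$ the element $x = e_1 + e_2$ satisfies $x\cdot x = (e_1+e_2) + (-e_1-e_2) = 0$. In each case, writing the condition $x\cdot f_2 = 0$ for $f_2 = B_1 e_1 + B_2 e_2$ in the original basis produces a rank-one constraint whose only solutions are multiples of $x$, so no independent extension can exist.

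For $E_5(a_2,a_3)$ and $E_6(a_4)$ the generator is $x = e_1 + t_0 e_2$ with $t_0 \neq 0$ a root of an auxiliary cubic obtained by forcing $x\cdot x$ to be a scalar multiple of $x$. For $E_5$ this cubic is $a_3 t^3 - t^2 + t - a_2 = 0$; a short case analysis on whether $a_3$ vanishes confirms that a nonzero complex root always exists. For $E_6$ the cubic is $t^3 - a_4 t^2 - 1 = 0$, whose nonvanishing constant term immediately excludes $t_0 = 0$. Having fixed such $x$, the orthogonality condition $x \cdot f_2 = 0$, written in the given natural basis, becomes a linear system in the pair $(A_1 B_1,\, A_2 B_2)$ whose coefficient matrix has determinant $1 - a_2 a_3$ for $E_5$ (nonzero by the hypothesis of Theorem \ref{thm22}) and $-1$ for $E_6$. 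Thus $A_1 B_1 = A_2 B_2 = 0$, and since $A_1 A_2 \neq 0$ by construction, this forces $B_1 = B_2 = 0$.

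The main technical point will be the case analysis for $E_5$: one must confirm existence of a root $t_0 \neq 0$ of the cubic across all admissible parameters $(a_2,a_3)$ with $1 - a_2 a_3 \neq 0$, treating the degeneration $a_3 = 0$ (where the cubic drops to a quadratic) separately. Beyond that, the entire argument is a uniform non-degeneracy check on an explicit $2 \times 2$ matrix built from the structure constants, and the hypothesis $1 - a_2 a_3 \neq 0$ enters the proof at exactly the point where this matrix is required to be invertible.
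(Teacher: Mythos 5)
Your proposal is correct and follows essentially the same route as the paper: for $E_2$ and $E_3$ an isotropic element with both coordinates nonzero, and for $E_5$, $E_6$ a generator of a one-dimensional subalgebra with both coordinates nonzero obtained from a cubic (the paper normalizes $x\cdot x=x$ and gets a cubic in $A_1$, you normalize $A_1=1$ and get the equivalent cubic in $t=A_2/A_1$), followed by the same invertibility check on the $2\times 2$ orthogonality system with determinant $1-a_2a_3$. The only substantive step you leave as a claim, the existence of a nonzero root of $a_3t^3-t^2+t-a_2=0$ for all admissible $(a_2,a_3)$, does go through (if $a_2\neq 0$ every root is nonzero and the polynomial is nonconstant; if $a_2=0$ factor out $t$ and the cofactor has nonzero constant term), matching the paper's analogous root-selection argument for its cubic $(3.3)$.
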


\begin{proof}

\

\noindent\textbf{1.} Let $E_2'$ be a one-dimensional subalgebra of
$E_2$ and $E_2'=<x>$ with $x=A_1e_1 + A_2e_2.$ The equality $x
\cdot x = \alpha x$ implies $A_1^2 +A_2^2 = \alpha A_1$ and
$\alpha A_2 =0.$ We are seeking a subalgebra with a natural basis
that can not be extended to a basis of the algebra. Thus,
$\alpha=0$.

We set $A_2=iA_1$. Then $x=A_1(e_1+ie_2).$ Let us assume that the
basis $\{x\}$ can be extended to the natural basis of $E_2,$ that
is, there exists $y \in E_2$ such that $\{x, y\}$ is a natural
basis of $E_2.$ Let $y=B_1e_1 + B_2 e_2,$ then
$$0= x \cdot y = A_1(e_1+ie_2)\cdot(B_1e_1 + B_2 e_2) = A_1(B_1+iB_2)e_1.$$

Hence $B_2 = iB_1$ and we obtain a contradiction with the linear independence of elements $x$ and $y$. Thus, the evolution algebra $E_2$ does not satisfy the condition $P.$

\noindent\textbf{2.} Let $E_3'=<x>$ be a one-dimensional
subalgebra of $E_3$ with $x=A_1e_1 + A_2e_2$. Putting $A_1=A_2=1$
we conclude that $E_3'=<x>$ is a subalgebra. Let us assume that
$x=e_1+e_2$ can be extended to a natural basis of $E_3,$ then
there exists $y=B_1e_1 + B_2 e_2,$ such that $\{x, y\}$ is a
natural basis of $E_3.$

From the following equality
$$0= x \cdot y = (e_1+e_2)\cdot(B_1e_1 + B_2 e_2) =(B_1 - B_2)e_1 + (B_1 - B_2)e_2,$$
we derive $B_2 = B_1,$ which is a contradiction with condition of
$\{x, y\}$ being a basis.

Therefore, evolution algebra $E_3$ does not satisfy the condition $P.$

\noindent\textbf{3.} The element $x=A_1e_1 + A_2e_2$ forms a basis
of a one-dimensional subalgebra of $E_5$. Therefore, $\alpha x = x
\cdot x$ for some $\alpha\in \mathbb{C}$. Note, that the condition
$dim E_5=2$ implies $x \cdot x\neq0$ (consequently $\alpha\neq0$).
Without loss of generality we can assume that $\alpha=1$. Then $x
= x \cdot x$ deduce
$$\begin{cases}A_1^2+A_2^2 a_3  = A_1, \\
A_1^2a_2+A_2^2  = {A_2}. \end{cases} \eqno (3.1)
$$

It is not difficult to check that the system of equation (3.1) has a solution
$A_1, A_2$ such that $A_1A_2 \neq 0.$ Indeed, if
$a_2=a_3=0,$ then $A_1=A_2=1$ is a solution of the equation
(3.1).

Let us assume that $(a_2,a_3)\neq (0,0)$ then, without loss of generality, we can
suppose $a_3 \neq 0.$ Then from the equation (3.1) we have
$$\begin{array}{lr}
A_2 = \frac {A_1} {a_3} ((a_2a_3-1) A_1 + 1 ),&\ \ \ \ \hfill (3.2)\\
A_1^3+\frac 2 {a_2a_3-1} A_1^2 + \frac {a_3+1} {a_2a_3-1}A_1 -   \frac 1 {(a_2a_3-1)^2}=0.&\ \ \ \ \hfill (3.3)
\end{array}$$

Note that the equation (3.3) with respect to $A_1$ has three solutions and one of them does not equal to $-\frac 1 {a_2a_3-1}.$ Recall that all solutions equal to $-\frac 1 {a_2a_3-1}$ has the following cubic equation
$$A_1^3+\frac 3 {a_2a_3-1} A_1^2 + \frac {3} {(a_2a_3-1)^2}A_1 +   \frac 1 {(a_2a_3-1)^3}=0.$$

Therefore, the equation (3.1) has a solution $A_1, A_2$ with
$A_1A_2 \neq 0.$ Consequently, there exists a subalgebra $E_5' =
<x>$ with $x=A_1e_1+A_2e_2,$ where $A_1A_2 \neq 0.$

The basis of this subalgebra can not be extended to a natural
basis of $E_5.$ Indeed, if $y=B_1e_1 + B_2 e_2$ with condition
that $\{x, y\}$ is a natural basis of $E,$ then
$$0= x \cdot y = (A_1e_1+A_2e_2)\cdot(B_1e_1 + B_2 e_2) =(A_1B_1 + A_2B_2a_3)e_1 + (A_1B_1a_2 + A_2B_2)e_2,$$
which implies $$A_1B_1 + A_2B_2a_3=0,$$ $$A_1B_1a_2 + A_2B_2=0.$$

Since $A_1A_2 (1-a_2a_3) \neq 0,$ we get $B_1=B_2=0.$ It is a
contradiction with condition of $\{x, y\}$ being a basis.

Therefore, the two dimensional evolution algebra $E_5$ does not
satisfy the condition $P.$

\noindent \textbf{4.} The assertion that the algebra $E_6$ does
not satisfy the condition $P$ is carried out by applying similar
arguments as for the algebra $E_5$.
\end{proof}

%
%
%
%

Next, we present a result on preservation of the property $P$ for a direct sum of evolution algebra which satisfy the condition $P$ and abelian algebra.

\begin{prop} \label{pr3.3} Let $E$ be an $n-$dimensional evolution algebra which satisfies the condition $P$.
Then the evolution algebra $E\oplus \mathbb{C}^k$ also satisfies
the condition $P$.
\end{prop}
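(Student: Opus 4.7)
The plan is to reduce condition $P$ for $\widetilde{E} := E \oplus \mathbb{C}^{k}$ to condition $P$ for $E$ by exploiting the projection $\pi \colon \widetilde{E} \to E$, which is an algebra homomorphism because $\mathbb{C}^{k}$ is an abelian ideal of $\widetilde{E}$. Fix a natural basis $\{e_1,\dots,e_n\}$ of $E$ and a basis $\{f_1,\dots,f_k\}$ of $\mathbb{C}^{k}$; their union is a natural basis of $\widetilde{E}$. Let $S \subseteq \widetilde{E}$ be an arbitrary subalgebra: the goal is to produce a natural basis of $S$ that extends to a natural basis of $\widetilde{E}$.

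Since $\pi$ is a homomorphism, $\pi(S)$ is a subalgebra of $E$, and by hypothesis it admits a natural basis $\{u_1,\dots,u_m\}$ extending to a natural basis $\{u_1,\dots,u_n\}$ of $E$. I would lift each $u_i$ ($i \le m$) to some $s_i = u_i + v_i \in S$ with $v_i \in \mathbb{C}^{k}$. The key observation is that for any indices $i \ne j$,
\[
s_i \cdot s_j = u_i \cdot u_j + u_i \cdot v_j + v_i \cdot u_j + v_i \cdot v_j = 0,
\]
since $u_i \cdot u_j = 0$ in $E$ and every product involving an element of the abelian ideal $\mathbb{C}^{k}$ vanishes. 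Next, choose a basis $\{w_1,\dots,w_\ell\}$ of $S \cap \mathbb{C}^{k}$; applying $\pi$ to a general element of $S$ shows that $\{s_1,\dots,s_m, w_1,\dots,w_\ell\}$ is a basis of $S$, and since each $w_j$ has zero product with every element of $\widetilde{E}$, this basis is natural.

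To extend the natural basis of $S$ to a natural basis of $\widetilde{E}$, I set $s_i := u_i$ for $m < i \le n$ and complete $\{w_1,\dots,w_\ell\}$ to a basis $\{w_1,\dots,w_k\}$ of $\mathbb{C}^{k}$. The family $\{s_1,\dots,s_n, w_1,\dots,w_k\}$ consists of $n+k$ vectors: linear independence is verified by projecting onto $E$ (which eliminates the $w$'s and the $v$'s and returns the independent $u_i$'s), and all pairwise products vanish by the same case analysis as above. The argument is essentially bookkeeping; the one conceptual point that makes everything work is that the ambiguity $v_i$ in the lift $s_i = u_i + v_i$ is automatically harmless, because the abelian ideal $\mathbb{C}^{k}$ annihilates every product in $\widetilde{E}$. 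For this reason I do not expect any genuine obstacle in carrying out the proof.
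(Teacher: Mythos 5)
Your proof is correct and follows essentially the same route as the paper: both decompose the subalgebra $S$ via its projection onto $E$ (where condition $P$ is applied) and its intersection with the abelian summand $\mathbb{C}^k$, the paper implementing this by row-reducing the coefficient matrix of a basis of $S$ rather than by the projection homomorphism. If anything, your version is tidier: by applying condition $P$ to $\pi(S)$ \emph{first} and then lifting the resulting natural basis into $S$ (the ambiguity of the lift being harmless since $\mathbb{C}^k$ annihilates all products), you make explicit why the chosen basis elements pairwise annihilate --- a point the paper's argument passes over somewhat quickly.
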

\begin{proof} Let $\{e_1, e_2, \dots, e_n, h_1, h_2, \dots, h_k\}$ be a
basis of $E\oplus \mathbb{C}^k$ and $M$ be an $s$-dimensional subalgebra
of $E\oplus \mathbb{C}^k.$ We set $\{x_1, x_2, \dots, x_s\}$ as a basis of
$M$ and $x_i = \sum\limits_{j=1}^n \beta_{i,j}e_j +
\sum\limits_{j=1}^k \gamma_{i,j}h_j.$

Consider $$x_i\cdot x_j =\sum\limits_{t=1}^n \beta_{i,t}\beta_{j,t}
\sum\limits_{k=1}^na_{t,k}e_k, \quad 1\leq i,j \leq s.$$

Since $x_i\cdot x_j$ belong to $M$, then the elements
$\sum\limits_{t=1}^n \beta_{i,t}\beta_{j,t}
\sum\limits_{k=1}^na_{t,k}e_k$ are expressed by linear
combinations of elements $y_i = \sum\limits_{j=1}^n\beta_{i,j}e_j,
\ 1\leq i \leq s.$ Consider $N=<y_1, y_2, \dots, y_s>$. It is easy
to see that $N$ is a subalgebra of $E$ of dimension $s'\leq s.$

For the sake of convenience, by renumeration of indexes, we can
assume that basis of $N$ is $\{y_1, y_2, \dots, y_s'\}$.

If $s'=s$, then using conditions of proposition we can find a
natural basis $\{y_1, y_2, \dots, y_{s}, z_1, z_2,\dots,$ $
z_{n-s}\}$ of $E.$ Thus the following basis $\{x_1, x_2, \dots,
x_{s}, z_1, z_2,\dots, z_{n-s}, h_1, h_2 \dots,h_{k}\}$ is a
natural basis of $E \oplus C^k.$

If $s'< s,$ then by elementary transformation of matrices we conclude
$$\left(\begin{matrix} \beta_{1,1} &\dots &\beta_{1,n} & \gamma_{1,1} &\dots
& \gamma_{1,k}\\
\beta_{2,1} &\dots &\beta_{2,n} & \gamma_{2,1} &\dots &
\gamma_{2,k}\\ \vdots &\vdots &\vdots & \vdots &\vdots & \vdots\\
\beta_{s,1} &\dots &\beta_{s,n} & \gamma_{s,1} &\dots &
\gamma_{s,k}
\end{matrix}\right) \sim \left(\begin{matrix} \beta_{1,1} &\dots &\beta_{1,n} & \gamma_{1,1} &\dots
& \gamma_{1,k}\\
\beta_{2,1} &\dots &\beta_{2,n} & \gamma_{2,1} &\dots &
\gamma_{2,k}\\ \vdots &\vdots &\vdots & \vdots &\vdots & \vdots\\
\beta_{s',1} &\dots &\beta_{s',n} & \gamma_{s',1} &\dots &
\gamma_{s',k} \\
0 &\dots & 0 & \gamma'_{s'+1,1} &\dots & \gamma'_{s'+1,k} \\
\vdots &\vdots &\vdots & \vdots &\vdots & \vdots\\ 0 &\dots &0 &
\gamma'_{s,1} &\dots & \gamma'_{s,k}
\end{matrix}\right).$$

Hence, the following elements $$x_i' =
\left\{\begin{array}{ll}\sum\limits_{j=1}^n \beta_{i,j}e_j +
\sum\limits_{j=1}^k \gamma_{i,j}h_j & 1 \leq i \leq s'
\\ \sum\limits_{j=1}^k \gamma'_{i,j}h_j & s'+1 \leq i \leq s\end{array}\right.$$
form a natural basis of $M$.

Now, we show that this basis is extendable to a natural basis of
$E \oplus \mathbb{C}^k.$ Due to $N$ being a subalgebra of $E,$ we
derive the existence of a natural basis $\{y_1, y_2, \dots,
y_{s'}, z_1, z_2,\dots, z_{n-s'}\}$ of $E.$ It is not difficult to
check that the following basis $$\{x_1', x_2', \dots, x'_{s'},
z_1, z_2,\dots, z_{n-s'}, x'_{s'+1}, x'_{s'+2}, \dots, x'_{s},
h_1', h_2' \dots, h_{k+s'-s}\}$$ is a natural basis of $E \oplus
\mathbb{C}^k,$ where $\{h_1', h_2' \dots, h_{k+s'-s}\}$ are the
complementary basis elements to $\{x'_{s'+1}, x'_{s'+2}, \dots,
x'_{s}\}$ in $\mathbb{C}^k.$
\end{proof}

Let $E$ be an $n$-dimensional evolution algebra such that $E=E_1\oplus E_2$, where $E_1$ and $E_2$ are the evolution subalgebras of $E$.

\begin{prop} \label{pr34} Let $E$ be a algebra satisfying the condition $P$. Then the subalgebras $E_1$ and $E_2$ also satisfy the condition $P$.
\end{prop}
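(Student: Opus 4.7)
The plan is to take an arbitrary subalgebra $M$ of $E_1$, view it as a subalgebra of the larger algebra $E$, invoke condition $P$ on $E$, and then project the resulting natural basis of $E$ onto $E_1$ using the direct sum structure. Concretely, I would first observe that since $M\subset E_1\subset E$, $M$ is a subalgebra of $E$, and hypothesis $P$ supplies a natural basis $\{f_1,\ldots,f_s\}$ of $M$ extendable to a natural basis $\{f_1,\ldots,f_s,f_{s+1},\ldots,f_n\}$ of $E$. Because $M\subset E_1$, the first $s$ vectors already lie in $E_1$; for the remaining ones I would write the unique decomposition $f_i=g_i+h_i$ with $g_i\in E_1$ and $h_i\in E_2$ for $s<i\leq n$.

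The crucial step is to verify that the family $\{f_1,\ldots,f_s,g_{s+1},\ldots,g_n\}$ is pairwise orthogonal inside $E_1$. Since $E=E_1\oplus E_2$ is a direct sum of algebras, $E_1\cdot E_2=0$. Expanding $0=f_i\cdot f_j$ for $s<i\neq j\leq n$ yields $0=g_ig_j+h_ih_j$; because $g_ig_j\in E_1$, $h_ih_j\in E_2$ and $E_1\cap E_2=0$, one concludes $g_ig_j=0$ and $h_ih_j=0$ separately. The same mechanism gives $f_k\cdot g_j=0$ for $k\leq s<j$, since $f_k\in E_1$ annihilates $h_j\in E_2$, whence $0=f_k\cdot f_j=f_k\cdot g_j$.

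Finally, the projection along $E_2$ of a basis of $E$ is a spanning family of $E_1$, so $\{f_1,\ldots,f_s,g_{s+1},\ldots,g_n\}$ spans $E_1$, and one can extract indices $i_1<\cdots<i_m$ (with $s+m=\dim E_1$) for which $\{f_1,\ldots,f_s,g_{i_1},\ldots,g_{i_m}\}$ is a basis of $E_1$. By the orthogonality derived above, it is in fact a natural basis of $E_1$, and by construction it extends the natural basis $\{f_1,\ldots,f_s\}$ of $M$. Thus $M$ is an evolution subalgebra of $E_1$ in the sense of Definition \ref{subalgebra}, proving that $E_1$ satisfies condition $P$; the argument for $E_2$ is symmetric.

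The only genuinely non-routine point I expect is the splitting of $g_ig_j+h_ih_j=0$ into $g_ig_j=0$ and $h_ih_j=0$; everything else is either linear-algebraic selection of a basis out of a spanning set or a direct unpacking of the hypothesis $P$ on $E$. This splitting rests essentially on the assumption that $E=E_1\oplus E_2$ is a direct sum of algebras (so that $E_1$ and $E_2$ are ideals) rather than just of vector spaces, and it is the only place where this additional structure enters.
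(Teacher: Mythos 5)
Your argument is correct and follows essentially the same route as the paper's own proof: pass to $E$, use condition $P$ there, decompose the complementary basis vectors along $E=E_1\oplus E_2$, use $E_1\cdot E_2=0$ to split the orthogonality relations componentwise, and extract a basis of $E_1$ from the resulting spanning set. Your write-up is in fact slightly more careful than the paper's (notably in making explicit why $g_ig_j+h_ih_j=0$ forces both summands to vanish), but there is no substantive difference in approach.
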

\begin{proof} Let $E_1'$ be a subalgebra of $E_1,$ then $E_1'$ is a subalgebra of $E.$ Therefore there exist a natural basis $\{e_1', e_2', \dots, e_m'\}$ of $E_1'$ which can be extended to a natural basis
$\{e_1', e_2', \dots, e_m', x_{m+1}, x_{m+2}, \dots, x_n\}$ of
$E.$ Since $E=E_1 \oplus E_2,$ then $x_{j}=y_j+z_j$ with $y_j \in
E_1,$ $z_j \in E_2,$ $m+1\leq j\leq n.$ From $e'_i \cdot x_k =0$
and $x_k \cdot x_t =0$ we deduce $e'_i \cdot y_k=0$ and $y_k \cdot
y_t=z_k \cdot z_t=0.$ Since $\{e_1', e_2', \dots, e_m', x_{m+1},
x_{m+2}, \dots, x_n\}$ is a basis of $E,$ then any element of
$E_1$ belongs to $<e_1', e_2', \dots, e_m', y_{m+1}, y_{m+2},
\dots, y_n>.$ From the elements $y_{m+1}, y_{m+2}, \dots, y_n$ we
choose some such that $\{e_1', e_2', \dots, e_m', y_{j_1},
y_{j_1}, \dots, y_{j_k}\}$ is a basis of $E_1.$ Thus, $E_1$
satisfies the condition $P$.
\end{proof}

The next example shows that the converse assertion of Proposition \ref{pr34} is not true in general.

\begin{exam} \label{exam3.5} \rm Let $E$ be a $4-$dimensional
evolution algebra defined by a direct sum of two-dimensional evolution algebras $E_1$ and $ E_2$, where
$$E_1: e_1\cdot e_1 = e_2; \qquad E_2: e_3\cdot e_3 = e_4.$$
Clearly, $E_1$ and $E_2$ are algebras satisfying the condition $P$, but $E$  not. Indeed, the subalgebra $L=<e_1+e_3, e_2+e_4>$ is not an evolution subalgebra.
\end{exam}

In the following proposition we identify evolution algebras with the condition $P$ among the algebras of the type
$E_{n,\pi}(a_1, a_2, \dots, a_n)$.

\begin{prop} Let $E$ be an $n$-dimensional evolution algebra of the type
$E_{n,\pi}(a_1, a_2, \dots, a_n)$ which satisfies the condition $P$. Then $E$ is
isomorphic to one of the following non-isomorphic algebras:
$$ES_1 \oplus \mathbb{C}^{n-1}, \qquad EN_{s} \oplus \mathbb{C}^{n-s}, \qquad ES_1 \oplus EN_{s}\oplus \mathbb{C}^{n-s-1}.$$
\end{prop}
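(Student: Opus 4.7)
The plan is to decompose $E$ via Theorem \ref{thm23} as $ES_{p_1}\oplus\cdots\oplus ES_{p_s}\oplus EN_{k_1}\oplus\cdots\oplus EN_{k_r}$, and then invoke Proposition \ref{pr34} iteratively so that every partial subsum of summands again inherits condition $P$. The task then reduces to ruling out the three ``bad'' configurations among the building blocks: a single $ES_k$ with $k\geq 2$, the presence of two or more $ES_1$ summands, and the presence of two or more $EN_k$ summands with $k\geq 2$.

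For $ES_k$ with $k\geq 2$, I would note that $x=e_1+e_2+\cdots+e_k$ is idempotent and thus spans a one-dimensional subalgebra; any candidate extension vector $f=\sum\alpha_ie_i$ must satisfy $x\cdot f=\alpha_ke_1+\alpha_1e_2+\cdots+\alpha_{k-1}e_k=0$, forcing $f=0$, so the basis $\{x\}$ admits no natural extension. Hence every $p_i$ must equal $1$. For two $ES_1$ summands with idempotent generators $f_1,f_2$, the same idempotent-plus-orthogonality argument applied to $f_1+f_2$ against $g=\alpha f_1+\beta f_2$ forces $g=0$, so $s\leq 1$. For $EN_k\oplus EN_l$ with $k,l\geq 2$, I would consider the $(k+l-1)$-dimensional subalgebra
$$M=\langle e_1^{(1)}+e_1^{(2)},\,e_2^{(1)},\ldots,e_k^{(1)},\,e_2^{(2)},\ldots,e_l^{(2)}\rangle,$$
verify closure under multiplication and naturality of the displayed basis (routine), and then check that the orthogonality conditions $f\cdot e=0$ for every basis element $e$ of $M$ force any extension vector $f$ to lie in $\langle e_k^{(1)},e_l^{(2)}\rangle\subset M$, precluding extension. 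Hence at most one $EN_{k_j}$ may have $k_j\geq 2$.

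Combining these three restrictions, the remaining $EN_{k_j}$'s all have $k_j=1$, i.e.\ are copies of $\mathbb{C}$, and assemble into an abelian factor $\mathbb{C}^t$, which by Proposition \ref{pr3.3} is neutral for condition $P$. The only admissible configurations are therefore $ES_1\oplus \mathbb{C}^{n-1}$, $EN_s\oplus\mathbb{C}^{n-s}$ (with the purely abelian case $\mathbb{C}^n$ absorbed as $s=1$), and $ES_1\oplus EN_s\oplus\mathbb{C}^{n-s-1}$, matching the three algebras in the statement. The main technical hurdle is the $EN_k\oplus EN_l$ case: correctly identifying the obstructing subalgebra $M$ and solving the linear system imposed by the orthogonality conditions to confirm that every candidate extension vector is trapped inside $M$; the $ES_k$ and $ES_1\oplus ES_1$ arguments are short direct computations patterned on those already used in Proposition \ref{pr34}'s companion results.
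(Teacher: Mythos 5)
Your proposal is correct and follows essentially the same route as the paper: decompose via Theorem \ref{thm23}, pass to summands and pairs of summands using Proposition \ref{pr34}, exclude $ES_k$ ($k\geq 2$), two $ES_1$'s, and two $EN$'s of dimension $\geq 2$ by exhibiting a subalgebra whose natural basis cannot be extended, and absorb the abelian part via Proposition \ref{pr3.3}. The only divergence is the witness in the $EN_k\oplus EN_l$ case, where the paper uses the $\min(k,l)$-dimensional ``shifted diagonal'' subalgebra $\langle e_{k-l+i}+f_i\rangle$ while you use the $(k+l-1)$-dimensional subalgebra $\langle e_1+f_1,e_2,\dots,e_k,f_2,\dots,f_l\rangle$; both work, and your orthogonality computation pinning the extension vector inside $\langle e_k, f_l\rangle$ is actually more explicit than the paper's ``not difficult to check.''
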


\begin{proof} Let $E$ be an algebra of the type $E_{n,\pi}(a_1, a_2, \dots, a_n)$, then
by Theorem \ref{thm23} we have $$E \cong ES_{p_1}\oplus ES_{p_2} \oplus \dots \oplus  ES_{p_s} \oplus EN_{k_1} \oplus EN_{k_2} \oplus \dots \oplus EN_{k_r}.$$
 Proposition \ref{pr34} we obtain that the algebras $ES_{p_i}$
and  $EN_{k_i}$ satisfy the condition $P.$

If there exists $p_j \geq 2$ with $1\leq j\leq s$ then, we have
$$ES_{p_j}: \quad  \left\{\begin{array}{ll}e_i\cdot e_i = e_{i+1}, & 1 \leq i \leq p_j-1,\\[1mm]
e_{p_j}\cdot e_{p_j} = e_{1}, & \\[1mm]
\end{array}\right.$$

This algebra does not satisfy the condition $P$, because the one-dimensional subalgebra $<x>$ with $x=e_1+e_2+\dots+e_{p_j}$ is not an evolution subalgebra. Thus, $p_j=1$ for any $j\in\{1, \dots, s\}.$

If there exist $i$ and $j$ such that $p_i=p_j=1,$ then from Example
\ref{exam3.5} we conclude that $E$ does not satisfy the condition $P.$
Therefore, we can assume $p_1=1$ and $p_j=0$ for $2 \leq j
\leq s.$

Let us suppose that there exist $i$ and $j$ such that $k_i \geq 2, k_j \geq 2.$ Without loss of generality we can suppose $i=1, \ j=2$ and $k_1 \geq k_2.$ We denote $\{e_1, e_2, \dots, e_{k_1}\}$ and $\{f_1, f_2, \dots, f_{k_2}\}$ the basis of $EN_{k_1}$ and $EN_{k_2}$, respectively. Then $M= <x_1, x_2, \dots, x_{k_2}>$ with $x_i = e_{k_1-k_2+i} + f_i, \ 1 \leq i \leq k_2$ form a subalgebra of $E$ with the following products $$x_i \cdot x_i = x_{i+1}, \quad 1 \leq i \leq k_2-1, \qquad x_{k_2} \cdot x_{k_2} =0.$$

It is not difficult to check that $M$ is not an evolution subalgebra. Thus, we get a contradiction with the assumption that there exist $i$ and $j$ such that $k_i \geq 2, k_j \geq 2.$ Therefore, we can assume $k_j=1$ for $2 \leq j \leq r.$

Since $EN_1$ is a one-dimensional algebra with trivial
multiplication, then by Proposition \ref{pr3.3} it is enough to
consider the case $s=r=1,$ that is, we reduce the study to
$ES_{p}\oplus EN_{k}$ with $ p\in\{0, 1\}.$

\begin{itemize} \item In the case of $p=1$ and $k=1$ we obtain the algebra $ES_{1}\oplus
\mathbb{C}^{n-1};$
\item In the case of $p=1$ and $k\geq 2$ we obtain the algebra
$ES_{1}\oplus EN_{k} \oplus \mathbb{C}^{n-k-1};$
\item In the case of $p=0,$ we obtain the algebra $EN_{k} \oplus
\mathbb{C}^{n-k}.$
\end{itemize}

It is not difficult to check that all obtained algebras
$ES_1 \oplus \mathbb{C}^{n-1}, \ EN_{s} \oplus \mathbb{C}^{n-s}, \ ES_1 \oplus
EN_{s}\oplus \mathbb{C}^{n-s-1}$ satisfy the condition $P$.
\end{proof}

\section{Nilpotent case.}

Let $E$ be an $n$-dimensional non-abelian evolution algebra with a natural basis
$\{e_1, e_2, \dots, e_n\}.$  By transformation of the basic elements we get the following table of multiplication
$$e_i^2 \neq 0, \quad \ 1 \leq i \leq k, \quad \quad e_i^2 = 0, \quad k+1 \leq i \leq n, \quad k\leq n. \eqno(4.1)$$

We consider the notation given in Theorem \ref{2.9}.
\begin{prop} \label{pr4.1} Let $rank(A)<k.$ Then $E$ does not
satisfy the condition $P.$
\end{prop}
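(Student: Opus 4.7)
The plan is to exhibit a one-dimensional subalgebra of $E$ whose (essentially unique) natural basis cannot be extended to a natural basis of $E$. The hypothesis $rank(A)<k$, combined with the observation from (4.1) that rows $k+1,\dots,n$ of $A$ vanish, gives a non-trivial linear dependence $\sum_{i=1}^{k}c_i\,e_i^{2}=0$ among the nonzero rows, with $c_1,\dots,c_k\in\mathbb{C}$ not all zero. Since $e_i^{2}\neq 0$ for every $i\leq k$, such a relation must involve at least two nonzero coefficients.

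Using algebraic closedness of $\mathbb{C}$, I would pick $b_i$ with $b_i^{2}=c_i$ and set $x:=\sum_{i=1}^{k}b_i e_i$. A direct computation gives $x\cdot x=\sum_{i=1}^{k} b_i^{2}e_i^{2}=\sum_{i=1}^{k} c_i e_i^{2}=0$, so $\langle x\rangle$ is a one-dimensional subalgebra of $E$, with $\{x\}$ as its natural basis (unique up to scalar). Note that $x\neq 0$ since some $c_i$, hence some $b_i$, is nonzero.

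The crux is to show $\{x\}$ cannot be extended to a natural basis of $E$. I would argue by contradiction: assume $\{x,y_2,\dots,y_n\}$ is such a natural basis. The natural basis axiom forces $x\cdot y_j=0$ for all $j\geq 2$, while $x\cdot x=0$ by construction; by bilinearity and the fact that $\{x,y_2,\dots,y_n\}$ spans $E$, this yields $x\cdot z=0$ for every $z\in E$. Testing against $z=e_i$ gives $0=x\cdot e_i=b_i\,e_i^{2}$ for every $i$, which---combined with $e_i^{2}\neq 0$ for $i\leq k$---forces every $b_i$ to vanish, contradicting $x\neq 0$. Thus $\langle x\rangle$ is a subalgebra whose natural basis is not extendable, so $E$ does not satisfy condition $P$.

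I do not anticipate any real obstacle: the whole argument hinges on the single observation that if $x$ squares to zero \emph{and} is part of a natural basis, then $x$ must annihilate the entire algebra, whereas $x\cdot e_i=b_i\,e_i^{2}$ is visibly nonzero for any $i\leq k$ with $b_i\neq 0$. The use of square roots of the dependence coefficients is the only place where working over $\mathbb{C}$ is essential.
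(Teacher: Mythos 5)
Your proof is correct, and it constructs exactly the same witness as the paper: a nonzero square-zero element $x=\sum_{i=1}^k b_ie_i$ obtained by taking square roots of the coefficients of a linear dependence among $e_1^2,\dots,e_k^2$ (which exists precisely because the last $n-k$ rows of $A$ vanish and $rank(A)<k$). Where you diverge is in showing that $\{x\}$ cannot be extended to a natural basis. The paper writes the putative complementary vectors $y_i=\sum_j\beta_{i,j}e_j$, extracts from $0=x\cdot y_i$ the coefficient relations $\beta_{i,s+1}=\sqrt{\alpha_{s+1}/\alpha_1}\,\beta_{i,1}$ (using the linear independence of $e_1^2,\dots,e_s^2$ with $s=rank(A)$), and concludes that two columns of the change-of-basis matrix are proportional, so its determinant vanishes. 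You instead observe that a square-zero member of a natural basis must annihilate every basis vector, hence by bilinearity the whole algebra, whereas $x\cdot e_i=b_ie_i^2\neq 0$ for any $i\le k$ with $b_i\neq 0$. Your finishing step is more elementary and more general: it isolates the reusable fact that in a commutative algebra any natural-basis element squaring to zero lies in the annihilator, and it avoids both the careful bookkeeping with the ratios $\sqrt{\alpha_j}$ and the need to single out indices with $\alpha_1,\alpha_{s+1}\neq 0$. The paper's determinant formulation has the minor virtue of matching the technique it reuses in the proof of the nilpotent classification theorem that follows, but as a standalone argument yours is cleaner and equally rigorous.
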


\begin{proof}  We shall prove the statement of proposition by the contrary. Let us assume that $rank(A) =s < k,$ then there exist the indexes $i_1, \ i_2, \dots, i_s$ such that the elements $e_{i_1}^2, e_{i_2}^2, \dots, e_{i_s}^2$ are linearly independent. For the sake of convenience we shall assume that
$e_{1}^2, e_{2}^2, \dots, e_{s}^2$ are linearly independent.

Consider the non-trivial linear combination $$\alpha_1 e_{1}^2 + \alpha_2 e_{2}^2 +  \dots +
\alpha_s e_{s}^2 + \alpha_{s+1} e_{s+1}^2=0.$$
Since $\alpha_{s+1} \neq 0$ (otherwise we obtain trivial linear combination) we get $$e_{s+1}^2 = - \frac{\alpha_1}
{\alpha_{s+1}}e_{1}^2 - \frac{\alpha_2} {\alpha_{s+1}} e_{2}^2 -
\dots - \frac{\alpha_s} {\alpha_{s+1}} e_{s}^2.$$

Due to existence $\alpha_i \neq 0$ for some $1\leq i \leq s,$ without loss of generality, we can assume
$\alpha_1 \neq 0.$

For the element $x = \sqrt{\alpha_1} e_{1} +
\sqrt{\alpha_2} e_{2} + \dots + \sqrt{\alpha_s} e_{s} +
\sqrt{\alpha_{s+1}} e_{s+1}$ we have $x\cdot x = 0.$ Hence, $<x>$ is
an one-dimensional subalgebra. Consequently, there exist a natural basis
$\{x, y_2, y_3, \dots, y_n\}$ of $E.$

Let us introduce the following denotations $$y_i = \sum\limits_{j=1}^n\beta_{i,j}e_j, \quad 2 \leq i \leq
n.$$

Consider
$$0=x\cdot y_i = (\sum\limits_{j=1}^{s+1}\sqrt{\alpha_j} e_{j})\cdot (\sum\limits_{j=1}^n\beta_{i,j}e_j) =
\sum\limits_{j=1}^{s+1}\sqrt{\alpha_j} \beta_{i,j}e_j^2=$$
$$\sum\limits_{j=1}^{s}\sqrt{\alpha_j} \beta_{i,j}e_j^2 -
\sqrt{\alpha_{s+1}}
\beta_{i,s+1}\sum\limits_{j=1}^{s}\frac{\alpha_j}
{\alpha_{s+1}}e_{j}^2 = \sum\limits_{j=1}^{s}\big(\sqrt{\alpha_j}
\beta_{i,j} - \sqrt{\alpha_{s+1}} \beta_{i,s+1}\frac{\alpha_j}
{\alpha_{s+1}} \big)e_j^2 .$$

Thus, $$\sqrt{\alpha_j} \beta_{i,j} - \sqrt{\alpha_{s+1}}
\beta_{i,s+1}\frac{\alpha_j} {\alpha_{s+1}}=0, \quad 2\leq i \leq
n, \ 1 \leq j \leq s. \eqno(4.2)$$

For $j=1$ in the restrictions (4.2) we obtain
$$\beta_{i,s+1} = \sqrt{\frac{\alpha_{s+1}}{\alpha_1}} \beta_{i,1}, \quad 2\leq i \leq n.$$

We have that $\{x, y_2, y_3, \dots, y_n\}$ and $\{e_1, e_2, \dots,
e_n\}$ are two bases of $E.$ Then the matrix of change of basis
has the following form:
$$B=\left(\begin{matrix} \sqrt{\alpha_1}&  \dots&
\sqrt{\alpha_s}& \sqrt{\alpha_{s+1}}& 0 & \dots & 0
\\[1mm] \beta_{2,1}& \dots& \beta_{2,s} &
\sqrt{\frac{\alpha_{s+1}}{\alpha_1}} \beta_{2,1}  & \beta_{2,s+2}& \dots&  \beta_{2,n}\\[1mm]
 \beta_{3,1}& \dots& \beta_{3,s} &
\sqrt{\frac{\alpha_{s+1}}{\alpha_1}} \beta_{3,1}  & \beta_{3,s+2}&
\dots& \beta_{3,n} \\[1mm]
\vdots&\vdots&\vdots&\vdots&\vdots&\vdots&\vdots\\[1mm] \beta_{n,1}& \dots& \beta_{n,s} &
\sqrt{\frac{\alpha_{s+1}}{\alpha_1}} \beta_{n,1}  & \beta_{n,s+2}&
\dots& \beta_{n,n}
\end{matrix}
\right).$$

Since $det(B) =0$ we get a contradiction. Thus, the algebra $E$ does not satisfy the condition $P$.
\end{proof}

In the following theorem we describe nilpotent evolution algebras which satisfy the condition $P$.

\begin{thm} An arbitrary nilpotent evolution algebra satisfying the condition $P$ is isomorphic to $$\widetilde{E} \oplus \mathbb{C}^{n-k},$$ where $\widetilde{E}\in ZN^{k}.$
\end{thm}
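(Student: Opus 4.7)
The plan is to construct explicitly a natural basis of $E$ realizing the decomposition. Write $k'$ for the integer in~(4.1) (to avoid clash with the $k$ in the statement). After applying Theorem~\ref{2.9} and the reordering in (4.1), $A$ is strictly upper triangular with $e_i^2 \ne 0$ for $1 \le i \le k'$ and $e_i^2 = 0$ otherwise. Proposition~\ref{pr4.1} forces $\mathrm{rank}(A) = k'$, so the rows $A_1, \ldots, A_{k'}$ (viewed as vectors in $E$) are linearly independent; the target decomposition will have $k = k'+1$.

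This linear independence yields a rigidity principle. If $f_i = \sum_l t_{il} e_l$ is another natural basis, then $f_i \cdot f_j = \sum_l t_{il} t_{jl} A_l = 0$ combined with $A_l = 0$ for $l > k'$ forces $t_{il} t_{jl} = 0$ for all $l \le k'$ and $i \ne j$. Hence each of the first $k'$ columns of the change-of-basis matrix supports at most one row. A direct counting argument then shows that in any natural basis of $E$, exactly $k'$ basis elements are \emph{active} (have nonzero entry on some $e_l$ with $l \le k'$), each with support of size $1$ in the active columns; otherwise the remaining $n - k''$ basis elements ($k''$ = number of active ones) would have to be linearly independent inside $\langle e_{k'+1}, \ldots, e_n\rangle$, exceeding its dimension $n - k'$.

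The crucial technical step is the chain lemma: condition $P$ forces $a_{l,l+1} \ne 0$ for every $l = 1, \ldots, k'-1$. I would prove this by contradiction. If $a_{l,l+1} = 0$, the subalgebra $M$ generated by $x := e_l + e_{l+1}$ has the property that every $y \in M$ satisfies $(y)_l = (y)_{l+1}$, since $x$ contributes $(1,1)$ on those coordinates while $x^2, (x^2)^2, \ldots$ (obtained from $x$ by iterated squaring) all contribute $(0,0)$, using $a_{l,l+1} = 0$ and upper triangularity. By condition~$P$, some natural basis of $M$ extends to $\{g_1, \ldots, g_n\}$ of $E$; by rigidity, columns $l$ and $l+1$ have unique supporters $g_{i_l}, g_{i_{l+1}} \in M$. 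If $i_l = i_{l+1}$, this $g$ has support size $\ge 2$, contradicting the support-$1$ conclusion; if $i_l \ne i_{l+1}$, then $(g_{i_l})_l \ne 0$ and $(g_{i_l})_{l+1} = 0$, contradicting the equal-component property of elements of $M$.

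Granted the chain lemma, construct the target basis top-down. Set $\bar e_k := e_{k'}^2 \in \langle e_{k'+1}, \ldots, e_n\rangle$, and define $\lambda_{k'} := 1$ with the recursion $\lambda_l^2\, a_{l,l+1} = \lambda_{l+1}$ (all nonzero by the lemma). For $l = k', k'-1, \ldots, 1$ set $\bar e_l := \lambda_l e_l + c_l$ with correction $c_l \in \langle e_{k'+1}, \ldots, e_n\rangle$ and coefficients $\alpha_{l,m}$ chosen so that $\bar e_l^2 = \bar e_{l+1} + \sum_{m=l+2}^{k} \alpha_{l,m} \bar e_m$; matching coefficients of $e_{l+1}, \ldots, e_{k'}$ determines the active $\alpha_{l,m}$, while the abelian matching (with $n - k' + 1$ unknowns and $n - k'$ equations per step) is underdetermined and always solvable. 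Finally pick $h_1, \ldots, h_{n-k}$ as any basis of a complement to $\langle \bar e_k\rangle$ in $\langle e_{k'+1}, \ldots, e_n\rangle$. All corrections and complement vectors lie in a subspace whose basis elements have zero squares and trivial pairwise products, so $\{\bar e_1, \ldots, \bar e_k, h_1, \ldots, h_{n-k}\}$ is a natural basis witnessing $E \cong \widetilde E \oplus \mathbb{C}^{n-k}$ with $\widetilde E \in ZN^k$. The main obstacle is the chain lemma.
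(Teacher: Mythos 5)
Your proof is correct and shares the paper's skeleton---Proposition~\ref{pr4.1} forces $\mathrm{rank}(A)=k'$, the heart of the matter is showing $a_{l,l+1}\neq 0$ for $1\le l\le k'-1$, and a final change of basis produces the form $ZN^{k'+1}\oplus\mathbb{C}^{n-k'-1}$---but both middle steps are argued genuinely differently. For the chain lemma the paper takes the \emph{largest} $t$ with $a_{t,t+1}=0$, extends the specific subalgebra $\langle e_t+e_{t+1},e_{t+2},\dots,e_n\rangle$ to a natural basis of $E$ assumed to contain those vectors verbatim, and derives $\det B=0$; your rigidity principle (each of the first $k'$ columns of a change-of-natural-basis matrix has exactly one nonzero entry, and each active row has support one) applied to the subalgebra generated by $e_l+e_{l+1}$ alone is cleaner, works for an arbitrary bad index $l$, and---notably---does not presuppose \emph{which} natural basis of the subalgebra gets extended, a point the paper glosses over. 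For the normalization the paper rescales the superdiagonal to $1$ and then writes an explicit closed formula killing the tail coefficients, whereas your top-down recursion $\bar e_l=\lambda_le_l+c_l$ achieves the same end; the one bookkeeping slip is that $c_l$ does not enter $\bar e_l^{\,2}$ (it lies in the square-zero part), so the $n-k'$ abelian equations at step $l$ are actually solved by the components of $c_{l+1}$ together with $\alpha_{l,k}$, i.e.\ the correction of $\bar e_{l+1}$ must be chosen at step $l$---which is legitimate because $\bar e_{l+1}^{\,2}$ is independent of $c_{l+1}$, and it is consistent with your own count of $n-k'+1$ unknowns, but the order of choices should be stated explicitly. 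Finally, the paper also verifies the converse (every algebra in $ZN^{k}\oplus\mathbb{C}^{n-k}$ satisfies the condition $P$), which the literal statement does not demand but which makes the classification sharp; you omit this direction.
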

\begin{proof}
Let $E$ be a nilpotent evolution algebra satisfying the condition $P$ with the table of multiplication (4.1).
Then the matrix $A$ has the form:
$$A= \left(\begin{matrix} 0& a_{1,2} &a_{1,3}& \dots & a_{1,k+1} &  \dots & a_{1,n}
\\[1mm] 0& 0& a_{2,3} & \dots & a_{2,k+1} & \dots  & a_{2,n}\\[1mm]
\vdots&\vdots&\vdots&\vdots&\vdots&\vdots &\vdots \\[1mm] 0& 0& 0 & \dots  & a_{k,k+1} & \dots  & a_{k,n}\\[1mm]
0& 0& 0 & \dots & 0 & 0 & 0 \\[1mm]
\vdots&\vdots&\vdots&\vdots&\vdots&\vdots &\vdots \\[1mm]
0& 0& 0 & \dots & 0 & 0 & 0
\end{matrix}
\right).$$

Putting $e_{k+1}' = \sum\limits_{j=k+1}^na_{k,j}e_j$ we can assume $e_k^2=e_{k+1},$ that is, we can always suppose $a_{k, k+1}=1$ and $a_{k, j}=0$ for $k+2 \leq j \leq n.$

Let $a_{1,2}a_{2,3}\dots a_{k-1,k}=0$ be. Then we denote by $t$ the greatest number such that $a_{t,t+1}=0,$ i.e.,
 $a_{i,i+1}\neq 0$ for $t+1 \leq i \leq k+1.$ If $a_{i,i+1}=0$ for all $1 \leq i \leq k-1,$ then we put $t=k.$

Consider the subalgebra $E_1=<e_t + e_{t+1}, e_{t+2}, \dots, e_n>.$ Then there exists a natural basis $\{y_1,$ $ y_2,
\dots,$ $ y_t,$ $ e_t + e_{t+1}, e_{t+2}, \dots, e_n\}$ of $E.$

We set $y_i = \sum\limits_{j=1}^n\beta_{i,j}e_j$ with $1 \leq i \leq t.$
Then
$$0=(e_t+e_{t+1})\cdot y_i = \beta_{i,t}e_t^2+ \beta_{i,t+1}e_{t+1}^2.$$
Due to Proposition \ref{pr4.1} we conclude $rank(A) = k.$ It
implies that $e_t^2$ and $e_{t+1}^2$ are linearly independent.
Therefore, $\beta_{i,t} = \beta_{i,t+1} =0, \ 1 \leq i \leq t.$ We
have two bases in $E:$  $\{x, y_2, y_3, \dots, y_n\}$ and $\{e_1,
e_2, \dots, e_n\}.$  Then, the matrix of changes of basis has the
following form:
$$B=\left(\begin{matrix} \beta_{1,1}& \dots& \beta_{1,t-1} &
0  & 0 & \beta_{1,t+2}& \dots& \beta_{1,n}\\[1mm]
\vdots&\vdots&\vdots&\vdots&\vdots&\vdots&\vdots&\vdots\\[1mm]
\beta_{t,1}& \dots& \beta_{t,t-1} & 0  & 0 & \beta_{t,t+2}& \dots&
\beta_{t,n} \\[1mm]
0& \dots& 0 & 1  & 1 & 0& \dots& 0 \\[1mm]
0& \dots& 0 & 0  & 0 & 1& \dots& 0\\[1mm]
\vdots&\vdots&\vdots&\vdots&\vdots&\vdots&\vdots&\vdots\\[1mm]
0& \dots& 0 & 0  & 0 & 0& \dots& 1\\
\end{matrix}
\right).$$
Hence, $det(B) =0$ and we get a contradiction. Therefore, $a_{1,2}a_{2,3}\dots a_{k-1,k}\neq 0.$

Taking the following change of basis:
$$\left\{\begin{array}{ll}
e_1' = a_{1,2}^{-1/2}a_{2,3}^{-1/4} \dots a_{k-1,k}^{-1/2^{k-1}}e_1,\\[1mm]
e_2' = a_{2,3}^{-1/2}a_{3,4}^{-1/4} \dots
a_{k-1,k}^{-1/2^{k-2}}e_2,\\[1mm]
\dots\dots\dots\dots\dots\dots \\[1mm]
e_{k-1}' = a_{k-1,k}^{-1/2}e_{k-1},& k \leq i \leq n,\\[1mm]
e_{i}' = e_{i},
\end{array}\right.$$
 we can suppose $a_{1,2} = a_{2,3}= \dots = a_{k-1,k}=1.$

Moreover, the basis transformation
$$e_j''=e_j'+\sum\limits_{i=k+2}^na _{j-1, i}e_i' +
\sum\limits_{i=k+2}^n\left( \sum\limits_{t=j}^{k-1}
a_{t,i}\left(\sum\limits_{p=1}^{t-j+1}(-1)^p \prod\limits_{h=1}^p
a_{j-2+h, t+1-p+h}\right)\right)e_i', \quad 2 \leq j \leq k,
$$
implies that the algebra $E$ belongs to the family of algebras
$ZN^{k+1} \oplus \mathbb{C}^{n-k-1}.$ Taking into account the result of Proposition \ref{pr3.3}
it is enough to prove that any evolution algebra of the set $ZN^{n}$ satisfies the condition $P.$

Indeed, if a subalgebra $M$ of $\widetilde{E}$ (where
$\widetilde{E}\in ZN^{k}$) contains an element
$e_j+\sum\limits_{s=j+1}^k\beta_{s}e_s$,  then $\{e_j, e_{j+1},
\dots, e_k\} \subseteq M.$ Hence, the algebra $\widetilde{E}$ has
only subalgebras of the form $E_i =<e_i, e_{i+1}, \dots, e_k>.$ It
is not difficult to see that the subalgebras $E_i$ are the
evolution subalgebras of $\widetilde{E}.$

\end{proof}

\section{Conjectures}

In this section we formulate two related conjectures. The positive answer to the first conjecture implies a positive answer for the second one. In fact, the correctness of the second conjecture close the description of evolution algebras which satisfy the condition $P$.

\begin{conj} \label{con1} Let $A = (a_{i,j})_{1\leq i,j\leq n}$ be a complex
invertible matrix. Then the following system of equations $$\left(\begin{matrix}x_1^2\\ x_2^2 \\ \vdots\\
x_n^2 \end{matrix}\right) =\left(\begin{matrix}a_{1,1} & a_{2,1}& \dots & a_{n,1}\\
a_{1,2} & a_{2,2}& \dots & a_{n,2}\\
\vdots & \vdots& \dots & \vdots \\a_{1,n} & a_{2,n}& \dots & a_{n,n}\\
\end{matrix}\right) \left(\begin{matrix}x_1\\ x_2 \\ \vdots\\ x_n
\end{matrix}\right) \eqno (5.1)$$
has a solution $(x_{1}, x_2, \dots, x_n)$ such that $x_i \neq 0$ for
all $i.$
\end{conj}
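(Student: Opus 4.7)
The plan is to enumerate the solutions of (5.1) via a B\'ezout count and then argue that not all of them can lie on coordinate hyperplanes. Setting $B = A^T$, the system reads $F_i(x) := x_i^2 - (Bx)_i = 0$ for $i = 1, \dots, n$. First I would homogenize by introducing $X_0$ and writing $x_i = X_i/X_0$, obtaining the $n$ quadrics $X_i^2 - X_0 \sum_j b_{i,j} X_j = 0$ in $\mathbb{P}^n$. The hyperplane at infinity $\{X_0 = 0\}$ meets the system only where $X_i^2 = 0$ for every $i$, i.e.\ nowhere in $\mathbb{P}^n$. Since any positive-dimensional projective subvariety meets every hyperplane, the intersection is forced to be $0$-dimensional, so B\'ezout's theorem gives exactly $2^n$ affine solutions counted with multiplicity.

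Next I would isolate the trivial root: $x = 0$ always satisfies the system, and the Jacobian of $F$ at $0$ equals $-B$, which is invertible by hypothesis, so $0$ is a simple zero. This leaves $2^n - 1$ nonzero solutions with multiplicity, and it remains to produce one lying in the torus $(\mathbb{C}^*)^n$.

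Assume for contradiction that $V \setminus \{0\} \subseteq \bigcup_{i=1}^n H_i$, where $V$ is the affine solution set and $H_i = \{x_i = 0\}$. On $H_i$ the $i$-th equation collapses to the linear constraint $\sum_{j \neq i} b_{i,j} x_j = 0$, while the remaining $n-1$ equations become quadrics in the variables $\{x_j : j \neq i\}$; the same homogenization trick bounds the number of affine solutions of those $n-1$ quadrics by $2^{n-1}$, and the extra linear constraint should trim them further, with a noticeable share concentrated at $0$ in multiplicity.

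The main obstacle will be sharpening this count: the naive union bound $\sum_i |V \cap H_i \setminus \{0\}| \leq n \cdot 2^{n-1}$ already exceeds $2^n - 1$. I see two routes forward. The first is inclusion--exclusion over the subspaces $\bigcap_{i \in S} H_i$ combined with careful multiplicity bookkeeping at the origin of each slice, exploiting that $0$ lies on every $H_i$ and absorbs a significant share of each local B\'ezout count. The second is a deformation argument: connect $B$ through an algebraic path in $GL_n(\mathbb{C})$ to a diagonal matrix $D = \mathrm{diag}(d_1, \dots, d_n)$ with $d_i \neq 0$, where the explicit solution $(d_1, \dots, d_n)$ has full support, and follow this solution via continuity of roots in the resulting flat family, invoking conservation of the B\'ezout count to argue that it cannot be absorbed into $\bigcup_i H_i$ at the endpoint. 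Making either route rigorous --- especially controlling what happens in the degenerate strata of $GL_n(\mathbb{C})$ where distinct roots collide --- is the heart of the proof.
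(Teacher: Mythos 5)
You should first be aware that the paper does not prove this statement: it is stated as Conjecture 5.1 and is verified only for $n=1$ (trivially) and $n=2$ (by explicit elimination: solving for $x_2$ in terms of $x_1$, obtaining $x_1\cdot q(x_1)=0$ with $q$ a cubic whose constant term is $\pm a_{1,2}\det A\neq 0$ and which cannot equal $(x-a_{1,1})^3$). So there is no proof in the paper for general $n$ to compare against; you are attempting something the authors explicitly leave open. Your preliminary analysis is correct and already goes beyond the paper: the homogenized quadrics have no common zero on $\{X_0=0\}$, so the affine solution set is finite of total multiplicity $2^n$ by B\'ezout, and the origin is a simple zero because the Jacobian there is $-A^T$, which is invertible. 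This rigorously yields a \emph{nonzero} idempotent for every invertible $A$ and every $n$, which is genuine progress.

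However, the step that actually matters --- producing a solution with \emph{all} coordinates nonzero --- is not carried out, and both routes you sketch run into obstructions you name but do not overcome. For the counting route, the multiplicity of a solution lying on $H_i$ computed in the ambient $n$-dimensional system need not agree with its multiplicity in the sliced $(n-1)$-variable system, so the local B\'ezout numbers of the slices do not add up against the global count of $2^n-1$ in any straightforward inclusion--exclusion; a single nonzero solution on a coordinate hyperplane can in principle absorb a large share of the total multiplicity (already for $n=2$ the naive set-theoretic bounds do not close the argument without the multiplicity analysis the paper does by hand). For the deformation route, the fatal point is that ``all coordinates nonzero'' is an open condition, not a closed one: conservation of the B\'ezout number along a path in $GL_n(\mathbb{C})$ says nothing that prevents the tracked full-support root from converging, at the endpoint, to a point of $\bigcup_i H_i$ or to a multiple root sitting there. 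Some additional invariant or positivity/nondegeneracy argument is needed to exclude this degeneration, and that is precisely where the difficulty of the conjecture lies. As it stands, your proposal is a correct reduction plus an honest description of the remaining gap, not a proof.
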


\begin{itemize}
\item If $n=1,$ then this conjecture is evidently true.

\item If $n=2,$ then we consider subcases:

\begin{enumerate}
\item Subcase $(a_{1,2},a_{2,1})=(0,0).$ Then $a_{1,1}a_{2,2}\neq 0$ and we have a
solution $x_1=a_{1,1}, x_2 =a_{2,2}.$

\item Subcase $(a_{1,2}, a_{2,1})\neq (0,0).$ Then, without loss of generality,
we can assume $a_{1,2}\neq 0.$ Putting $x_2 = \frac 1 {a_{1,2}}(x_1^2 - a_{1,1}x_1),$ we get
$$x_{1}(x_1^3-2a_{1,1}x_1^2+(a_{1,1}^2-a_{1,2}a_{2,2})x_1+a_{1,2}(a_{1,2}a_{2,1}-a_{2,2}a_{1,1}))=0. \eqno (5.2)$$

Since $a_{1,2}(a_{1,2}a_{2,1}-a_{2,2}a_{1,1}) \neq 0,$ the equation (5.2) has three non-trivial solution.
Moreover,
$$x_1^3-2a_{1,1}x_1^2+(a_{1,1}^2-a_{1,2}a_{2,2})x_1+a_{1,2}(a_{1,2}a_{2,1}-a_{2,2}a_{1,1})\neq
(x-a_{1,1})^3.$$
From this inequality we deduce that equation (5.2) has a
solution $x_1$ different from $0$ and $a_{1,1}.$ Hence, $x_2 = \frac 1 {a_{1,2}}
(x_1^2 - a_{1,1}x_1)\neq 0.$

Thus, Conjecture \ref{con1} for the case  $n=2$ is correct, as well.
\end{enumerate}
\end{itemize}

Now we present two consequences of Conjecture \ref{con1} about the description of evolution algebras satisfying the condition $P$.

\begin{conj} \label{con2}  Let $E$ be an $n$-dimensional ($n\geq 2$) evolution algebra
with natural basis $\{e_1, e_2, \dots, e_n\}$ and an invertible
matrix $A.$ Then $E$ does not satisfy the condition $P.$
\end{conj}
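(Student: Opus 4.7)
The plan is to deduce Conjecture \ref{con2} from Conjecture \ref{con1}. Given an evolution algebra $E$ with invertible structural matrix $A$, I will produce a one-dimensional subalgebra $\langle x\rangle$, generated by a vector $x=\sum_{i=1}^n x_i e_i$ whose coordinates $x_i$ are \emph{all} nonzero, and then use invertibility of $A$ to show that $\{x\}$ cannot be completed to a natural basis of $E$.

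First I would translate the subalgebra condition $x\cdot x=\lambda x$ into coordinates. A direct computation gives $x\cdot x=\sum_k\bigl(\sum_i a_{i,k}x_i^2\bigr)e_k$, so the requirement becomes
\[
A^{T}(x_1^2,\dots,x_n^2)^{T}=\lambda(x_1,\dots,x_n)^{T}.
\]
Since $A$ is invertible, the case $\lambda=0$ forces $x=0$, and rescaling $x$ by $c$ multiplies $\lambda$ by $c$, so we may normalize $\lambda=1$ and rewrite the condition as $(x_1^2,\dots,x_n^2)^{T}=(A^{T})^{-1}(x_1,\dots,x_n)^{T}$. This is precisely system (5.1) of Conjecture \ref{con1} applied to the invertible matrix $A^{-1}$ (whose transpose is $(A^{T})^{-1}$). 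Assuming Conjecture \ref{con1}, we obtain an $x$ with every $x_i\neq 0$ such that $\langle x\rangle$ is a one-dimensional (and hence automatically evolution) subalgebra.

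The extension obstruction is then a short linear-algebra argument mirroring the one in Proposition \ref{pr4.1}. Suppose $\{x,y_2,\dots,y_n\}$ were a natural basis of $E$, with $y_j=\sum_k\beta_{j,k}e_k$. From $x\cdot y_j=0$ for $j\geq 2$ one computes $0=\sum_l\bigl(\sum_i a_{i,l}x_i\beta_{j,i}\bigr)e_l$, i.e.\ $A^{T}(x_1\beta_{j,1},\dots,x_n\beta_{j,n})^{T}=0$. Invertibility of $A$ forces $x_i\beta_{j,i}=0$, and since every $x_i\neq 0$ this yields $\beta_{j,i}=0$ for all $i$, so $y_j=0$, contradicting linear independence. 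Therefore $\{x\}$ admits no natural extension, and $E$ fails property $P$.

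The hard part is clearly Conjecture \ref{con1} itself, which is verified in the paper only for $n\leq 2$; everything else is a direct linear-algebraic consequence of the invertibility of $A$. The hypothesis $n\geq 2$ enters exactly once, at the very last step, where at least one companion $y_j$ must exist for the contradiction to bite. Thus the full difficulty of Conjecture \ref{con2} is concentrated in the solvability of the quadratic system (5.1) for an arbitrary invertible matrix.
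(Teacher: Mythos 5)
Your argument is correct and follows essentially the same route as the paper: reduce to the quadratic system (5.1) via $x\cdot x = x$, invoke Conjecture \ref{con1} to obtain a fixed point with all coordinates nonzero, and conclude that $\langle x\rangle$ cannot be completed to a natural basis. You supply two details the paper glosses over --- that the coefficient comparison actually yields $A^{T}(x_1^2,\dots,x_n^2)^{T}=(x_1,\dots,x_n)^{T}$, so Conjecture \ref{con1} must be applied to $A^{-1}$ rather than to $A$ itself, and the explicit linear-algebra verification that no companion $y_j$ can exist --- but the substance of the deduction is identical.
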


Indeed, if we consider $x\cdot x = x$ with $x=\sum\limits_{i=1}^nx_ie_i,$ then comparing the coefficients at the basic elements $e_i$, we obtain the system of equations (5.1). Due to $det A \neq 0$ and according to Conjecture \ref{con1} we get the existence of a solution $(x_{1}, x_2, \dots, x_n)$ such that $x_i \neq 0$ for all $i.$ Therefore, $E_1=<x>$ is a subalgebra of $E.$ However, this subalgebra is not an evolution subalgebra and the assumption of Conjecture \ref{con2} is correct.

\begin{conj} \label{con3} Let $E$ be an $n$-dimensional non-nilpotent evolution algebra
which satisfying the condition $P$. Then  $E$ is isomorphic to
one of the following, pairwise non-isomorphic, algebras:
$$ES_1 \oplus \mathbb{C}^{n-1}, \qquad ES_1 \oplus \widetilde{E}\oplus \mathbb{C}^{n-s-1},$$
 where $\widetilde{E}\in ZN^{s}$ is a nilpotent evolution algebra with maximal index of
nilpotency.
\end{conj}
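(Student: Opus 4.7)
The planned proof assumes Conjectures~\ref{con1}/\ref{con2} (the paper already shows Conjecture~\ref{con1}$\Rightarrow$Conjecture~\ref{con2}) and proceeds by peeling off the non-nilpotent part of $E$ and then invoking the nilpotent classification (Theorem~4.2 above). I would start by choosing a natural basis $\{e_1,\dots,e_n\}$ in the form $(4.1)$ with $e_i^2\neq 0$ for $i\le k$ and $e_i^2=0$ for $i>k$. The first step is to notice that the proof of Proposition~\ref{pr4.1} never actually uses nilpotency: the element $x=\sum_{i=1}^{s+1}\sqrt{\alpha_i}\,e_i$ satisfies $x\cdot x=0$ purely from a linear dependence $\sum\alpha_i e_i^2=0$, and the ensuing contradiction with condition $P$ only needs $P$. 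Thus $\{e_1^2,\dots,e_k^2\}$ are linearly independent, i.e.\ $\operatorname{rank}(A)=k$.

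The second step, which is also the main obstacle, is to locate an evolution subalgebra $F\subseteq E$ of dimension $\ge 2$ on which the structural matrix is invertible --- this is precisely where Conjecture~\ref{con2} would deliver a contradiction, unless the non-nilpotent part of $E$ is one-dimensional. Because $E$ is non-nilpotent, Theorem~\ref{2.9} ensures that no permutation of $\{e_1,\dots,e_k\}$ turns the principal block $(a_{i,j})_{1\le i,j\le k}$ into a strictly upper triangular matrix, so its associated directed graph must contain a cycle. I would extract a maximal such cycle and then apply tail-absorbing basis changes of the same type used at the end of the proof of the nilpotent classification theorem --- these exploit $\operatorname{rank}(A)=k$ to push the relevant $e_i^2$ back into the span of the chosen cycle indices --- to produce the desired $F$. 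The delicate point is making sure that, after these manoeuvres, $F$ is a genuine evolution subalgebra of $E$ in the sense of Definition~\ref{subalgebra}, so that condition $P$ really descends to $F$ by Proposition~\ref{pr34}. Granting this, Conjecture~\ref{con2} forces $\dim F=1$, hence the non-nilpotent part of $E$ is one-dimensional.

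Once the non-nilpotent dimension is known to be one, there is $x\in E$ with $x\cdot x=x$, and condition $P$ applied to $\langle x\rangle$ yields a natural basis $\{x,y_2,\dots,y_n\}$ of $E$ extending it. Then $E'=\langle y_2,\dots,y_n\rangle$ is a nilpotent evolution subalgebra --- any surviving element with a non-trivial idempotent-type relation in $E'$ would again violate $P$ by the argument just used --- so $E\cong ES_1\oplus E'$ with $E'$ nilpotent and, by Proposition~\ref{pr34}, itself satisfying $P$. Theorem~4.2 above then gives $E'\cong\widetilde E\oplus\mathbb{C}^{n-s-1}$ with $\widetilde E\in ZN^{s}$, the degenerate case $s=0$ recovering $E\cong ES_1\oplus\mathbb{C}^{n-1}$. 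Pairwise non-isomorphism of the two listed families is immediate from invariants such as $\dim E^{2}$ and the length of the descending chain $E\supset E^{2}\supset E^{3}\supset\cdots$.
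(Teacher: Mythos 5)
There is a genuine gap, and it sits exactly where you flag ``the main obstacle.'' Your step two asks for an evolution subalgebra $F$ of dimension $\ge 2$ with invertible structural matrix, obtained by ``extracting a maximal cycle'' and ``tail-absorbing basis changes,'' so that Proposition~\ref{pr34} lets condition $P$ descend to $F$ and Conjecture~\ref{con2} gives a contradiction. But Proposition~\ref{pr34} requires $E=F\oplus F'$ as a direct sum of \emph{evolution subalgebras} (mutually annihilating, with natural bases jointly extending to one of $E$), and producing such a splitting from a cycle in the directed graph is precisely the hard technical content --- it is not generally possible to isolate a cycle as a direct summand (already $e_1^2=e_1+e_2$, $e_2^2=0$ shows the loop at $e_1$ does not span a subalgebra), and you give no argument for when the required basis changes exist. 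The paper's own (conditional) argument avoids this entirely: it only splits off a direct summand in the one case where the full $k\times k$ principal block is invertible (there the complement $\mathbb{C}^{n-k}$ genuinely splits via an explicit matrix equation), and in all subsequent steps it iterates on the truncated row vectors $x_{s,t}=(a_{s,1},\dots,a_{s,t})$ and applies Conjecture~\ref{con1} \emph{directly inside $E$} to manufacture a one-dimensional idempotent subalgebra $\langle x\rangle$ with too many nonzero coordinates to be an evolution subalgebra. That device needs no descent to a summand and is what actually drives the reduction; your sketch replaces it with a placeholder.

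A second, smaller gap: after finding $x$ with $x\cdot x=x$ and a natural basis $\{x,y_2,\dots,y_n\}$, you set $E'=\langle y_2,\dots,y_n\rangle$ and treat $E\cong ES_1\oplus E'$. But $y_i^2$ may have a nonzero component along $x$, in which case $E'$ is not a subalgebra at all and Proposition~\ref{pr34} cannot be applied to it. In the paper's normal form this is exactly the content of the final step, where the iteration forces every row $e_i^2$ ($i\ge 2$) to have zero $e_1$-coordinate and then $(a_{1,2},\dots,a_{1,k})=(0,\dots,0)$ is derived from condition $P$; only then does the algebra split as $ES_1\oplus\widetilde E\oplus\mathbb{C}^{n-s-1}$. (Your opening observation that Proposition~\ref{pr4.1} does not use nilpotency, and the concluding appeal to the nilpotent classification, are both fine; note also that the statement is a conjecture and even the paper offers only an explanation conditional on Conjecture~\ref{con1}, so completeness is not expected --- but the cycle-extraction step is where your route would actually break down.)
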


{\bf Explanation of Conjecture \ref{con3}.}

Let $E$ be an $n$-dimensional non-nilpotent evolution algebra  satisfying the condition $P$ and with the table of multiplication (4.1).

Note that the table of multiplication (4.1) for $k=1$ give the algebra $ES_1 \oplus \mathbb{C}^{n-1}.$ Therefore, further we shall assume $k \geq 2.$

Let us introduce the denotations
$x_{s,t} = (a_{s,1},a_{s,2}, \dots, a_{s,t})$ with $1 \leq s \leq t$ and $1 \leq t\leq k.$

Note that there are no $s'$ and $s''$ such that $x_{s',k} =
x_{s'',k} =  (0,0, \dots, 0).$ In fact, if there exist $s'$ and
$s''$, then the subalgebra $E_1 = <e_{s'} +e_{s''}, e_{k+1},
\dots, e_n>$ is not an evolution subalgebra.

It is not difficult to see that the non-zero vectors $x_{s_1,k}, x_{s_2,k}, \dots,
x_{s_t,k}$ are linearly independent. Otherwise there exist a non-trivial linear combination
$$\alpha_1x_{s_1,k} + \alpha_2 x_{s_2,k} +  \dots +  \alpha_t x_{s_t,k}=0,$$
and the subalgebra $E_1 = <\sqrt{\alpha_1} e_{s_1} +\sqrt{\alpha_2} e_{s_2} +\dots+ \sqrt{\alpha_t} e_{s_t},
e_{k+1}, e_{k+2}, \dots, e_{n}>$ is not an evolution subalgebra.

\textbf{Iteration 1.} Let us assume that all the vectors $x_{s,k}$ are non-zero (there are $k$-pieces), then the determinant of the main minor of the order $k$ is non-zero.

Then taking the change $e_i' = e_i +
\sum\limits_{j=k+1}^n\beta_{i, j} e_j, \ 1 \leq i \leq k,$ where $\beta_{i, j}$ can be find from the following equation
$$ \left(\begin{matrix}a_{1,1} & a_{1,2}& \dots & a_{1,k}\\
a_{2,1} & a_{2,2}& \dots & a_{2,k}\\
\vdots & \vdots& \dots & \vdots \\a_{k,1} & a_{k,2}& \dots & a_{k,k}\\
\end{matrix}\right) \left(\begin{matrix}
\beta_{1,k+1} & \beta_{1,k+2} & \dots & \beta_{1,n} \\
\beta_{2,k+1} & \beta_{2,k+2} & \dots & \beta_{2,n} \\
\vdots & \vdots & \vdots & \vdots \\
\beta_{k,k+1} & \beta_{k,k+2} & \dots & \beta_{k,n}
\end{matrix}\right) = \left(\begin{matrix}
a_{1,k+1} & a_{1,k+2}& \dots & a_{1,n} \\
a_{2,k+1} & a_{2,k+2}& \dots & a_{2,n} \\
\vdots & \vdots & \vdots & \vdots \\
a_{k,k+1} & a_{k,k+2}& \dots & a_{k,n}
\end{matrix}\right), \eqno (5.3)$$
we obtain that the evolution algebra $E$ is isomorphic to the algebra
$E' \oplus \mathbb{C}^{n-k}.$ The basis $\{e'_1, e'_2, \dots, e'_k\}$ is a natural basis of the evolution algebra $E'$. Due to Proposition \ref{pr3.3} the evolution algebra $E'$ should satisfy the condition $P$, but according to Conjecture \ref{con2} the algebra $E'$ does not satisfy the condition $P$. Thus, in this case we get a contradiction.

Let us suppose that there exists some $s_0$ such that $x_{s_0,k}=(0,0, \dots, 0).$ Without loss of generality, we can suppose $s_0=k$. Then we obtain the multiplication
$$e_i\cdot e_i = \sum\limits_{i=1}^n a_{i,j}e_i,\quad 1 \leq i \leq k-1, \quad e_k\cdot e_k = \sum\limits_{i=k+1}^n a_{i,j}e_i, \quad e_i\cdot e_i =0, \quad k+1 \leq i \leq n.$$

Applying a change of basis similar to (5.3) we can suppose $a_{i,j}
=0$ for $1 \leq i \leq k-1, \ k+1 \leq j \leq n.$ In addition, choosing $e_{k+1}' = \sum\limits_{i=k+1}^n a_{i,j}e_i,$ we derive $e_k\cdot e_k =e_{k+1}.$

\textbf{Iteration 2.} Now we consider the vectors $x_{s,k-1} = (a_{s,1},a_{s,2}, \dots,
a_{s,k-1}),$ for $1 \leq s \leq k-1.$


Now we reduce our study to the case when all vectors $x_{s,k-1}$ are non-zero. Then the main minor of order $k-1$ is non-zero and the equality $x\cdot x = x$ with $x=\sum\limits_{i=1}^{k+1}x_ie_i$ implies the following system of
equations
$$\left(\begin{matrix}a_{1,1} & a_{2,1}& \dots & a_{k-1,1} & 0&0\\
a_{1,2} & a_{2,2}& \dots & a_{k-1,2}& 0&0\\
\vdots & \vdots& \dots & \vdots  & \vdots & \vdots \\a_{1,k} & a_{2,k}& \dots & a_{k-1,k}& 0&0\\
0& 0& \dots & 0& 1&0\\
\end{matrix}\right) \left(\begin{matrix}x_1^2\\ x_2^2 \\ \vdots\\
x_{k}^2 \\
x_{k+1}^2
\end{matrix}\right) = \left(\begin{matrix}x_1\\ x_2 \\ \vdots\\
x_k\\
x_{k+1}
\end{matrix}\right).$$

From Conjecture \ref{con1} we have the existence of solution $x_i\neq0$ of the system of equation
$$\left(\begin{matrix}a_{1,1} & a_{2,1}& \dots & a_{k-1,1} \\
a_{1,2} & a_{2,2}& \dots & a_{k-1,2} \\
\vdots & \vdots& \dots & \vdots  \\a_{1,k-1} & a_{2,k-1}& \dots & a_{k-1,k-1}\\
\end{matrix}\right) \left(\begin{matrix}x_1^2\\ x_2^2 \\ \vdots\\
x_{k-1}^2 \end{matrix}\right) = \left(\begin{matrix}x_1\\ x_2 \\ \vdots\\
x_{k-1}
\end{matrix}\right)$$
and $x_{k}=\sum\limits_{s=1}^{k-1}a_{s,k}x_{s}^2, \ x_{k+1} = x_k^2.$
Therefore, the element $x$ is not extendable to a natural basis of the evolution algebra $E$. We get a contradiction with the assumption that all vectors $x_{s,k-1}$ are non-zero.

Continuing with the iterations for the vectors $x_{s,k-2}, \
x_{s,k-3}, \dots x_{s,2},$ we conclude that for all $t$ there
exists $s_t$ such that $x_{s_t,t}=(0, 0, \dots, 0).$ By shifting
basis elements we can assume that $s_t=t$ and we obtain that the
evolution algebra $E$ is isomorphic to the following algebra:
$$\begin{array}{lll}
e_1\cdot e_1 = \sum\limits_{j=1}^k a_{i,j}e_i, & e_i\cdot e_i = \sum\limits_{j=i+1}^k a_{i,j}e_i,& 2 \leq i \leq k-1,\\[2mm]
e_{k}\cdot e_{k} =  e_{k+1}, &  e_i\cdot e_i =0, & k+1 \leq i \leq n.
\end{array}$$

For the element $x=\sum\limits_{i=1}^{k+1}x_ie_i$ the equality $x\cdot x = x$ implies the system of equations as follows
$$\left\{\begin{array}{l} a_{1,1}x_1^2= x_1,\\
a_{1,2}x_1^2= x_2,\\
a_{1,3}x_1^2+a_{2,3}x_2^2= x_3,\\
\dots\dots\dots\dots\dots\dots \\
a_{1,k}x_1^2+a_{2,k}x_2^2 +\dots +a_{k-1,k}x_{k-1}^2= x_k,\\
x_{k}^2= x_{k+1}.
\end{array}\right.$$

Taking into account that the algebra $E$ is non-nilpotent, we have $a_{1,1}\neq0$ and $x_1 = \frac
{1} {a_{1,1}}.$

If $(a_{1,2}, a_{1,3}, \dots, a_{1, k}) \neq (0,0,
\dots, 0),$ then there exists a solution $(x_1, \dots x_{k+1})$
such that $x_i \neq 0$ for some $2 \leq i \leq k+1.$
Similarly as above we conclude that the evolution algebra $E$ does not satisfy the condition $P$.

Thus, we get $(a_{1,2}, a_{1,3}, \dots, a_{1, k}) = (0,0, \dots,
0).$ Hence  the $n$-dimensional non-nilpotent evolution algebra $E$
satisfying the condition $P$ is isomorphic to one of the
following, pairwise non-isomorphic, algebras:
$$ES_1 \oplus C^{n-1}, \qquad ES_1 \oplus \widetilde{E}\oplus C^{n-s-1}, \qquad \widetilde{E}\in ZN^{s}.$$

\section*{ Acknowledgements}
The authors are grateful to Professor A.S. Dzhumadil'daev for
formulation of the problem considered in the paper. This works is
supported by the Grant No.0251/GF3 of Education and Science
Ministry of Republic of Kazakhstan. The second named author was
partially supported by IMU/CDC-program, and would like to
acknowledge the hospitality of the  Instituto de Mat\'{e}maticas
de la Universidad de Sevilla (Spain).

\end{document}